\theoremstyle{plain}
\newtheorem{thm}{Theorem}[section]
\newtheorem{cor}[thm]{Corollary}
\newtheorem{prop}[thm]{Proposition}
\newtheorem{lemma}[thm]{Lemma}
\newtheorem{example}[thm]{Example}
\newtheorem{definition}[thm]{Definition}
\newtheorem{conj}[thm]{Conjecture}
\newtheorem{prob}[thm]{Problem}
\newtheorem{remark}[thm]{Remark}
\newenvironment{Example}{\begin{example}\rm}{\end{example}}
\newcommand{\C}{{\ensuremath{\mathbb{C}}}}
\newcommand{\N}{{\ensuremath{\mathbb{N}}}}
\newcommand{\Z}{{\ensuremath{\mathbb{Z}}}}
\newcommand{\Q}{{\ensuremath{\mathbb{Q}}}}
\newcommand{\galg}{{\ensuremath{g_{\mathrm{alg}}}}}
\newcommand{\g}{{\ensuremath{g_4^\mathrm{top}}}}
\newcommand{\gZ}{{\ensuremath{g_\Z}}}
\newcommand{\gsm}{{\ensuremath{g_4^{\mathrm{sm}}}}}
\DeclareMathOperator{\sig}{\sigma}
\DeclareMathOperator{\Sig}{\Sigma}
\DeclareMathOperator{\om}{\omega}
\newcolumntype{C}{>{\boldmath\textbf}c}
\def\et{\;\mbox{and}\;}
\title[A note on the topological slice genus of satellite knots]{A note on the topological slice genus of satellite knots}
\author{Peter Feller}
\author{Allison N. Miller}
\author{Juanita Pinzon-Caicedo}
\begin{document}

\begin{abstract}
This paper presents evidence supporting the surprising conjecture that in the topological category the slice genus of a satellite knot $P(K)$ is bounded above by the sum of the slice genera of $K$ and $P(U)$. Our main result establishes this conjecture for a variant of the topological slice genus, the $\mathbb{Z}$-slice genus.
As an application, we show that the $(n,1)$-cable of any 3-genus 1 knot (e.g.~the figure 8 or trefoil knot) has topological slice genus at most 1. Further, we show that the lower bounds on the slice genus coming from the Tristram-Levine and Casson-Gordon signatures cannot be used to disprove the conjecture.
Notably, the conjectured upper bound does not involve the algebraic winding number of the pattern $P$. This stands in stark contrast with the smooth category, where for example there are many genus 1 knots whose $(n,1)$-cables have arbitrarily large smooth 4-genera.  \end{abstract}
\maketitle

\section{Introduction}

The behavior of the Seifert genera of knots under the satellite construction is completely understood. Let $P$ be a pattern, i.e.~a knot in a solid torus,  with (algebraic) winding number  $w$, let $K$ be a knot in $S^3$, and let $P(K)$ denote the resulting satellite knot in $S^3$; see \Cref{fig:4-1-cableoftrefoil} for an example and see \Cref{sec:Def} for precise definitions. A result of Schubert~\cite{schubert} states that for any pattern $P$  with winding number  $w$, there exists a constant $g_3(P)$---a version of the 3-genus for patterns---such that for any nontrivial knot $K$ in $S^3$ we have
\[g_3(P(K))= g_3(P)+ |w| g_3(K).\]

\begin{figure}[h]
\def\svgwidth{0.125\textwidth}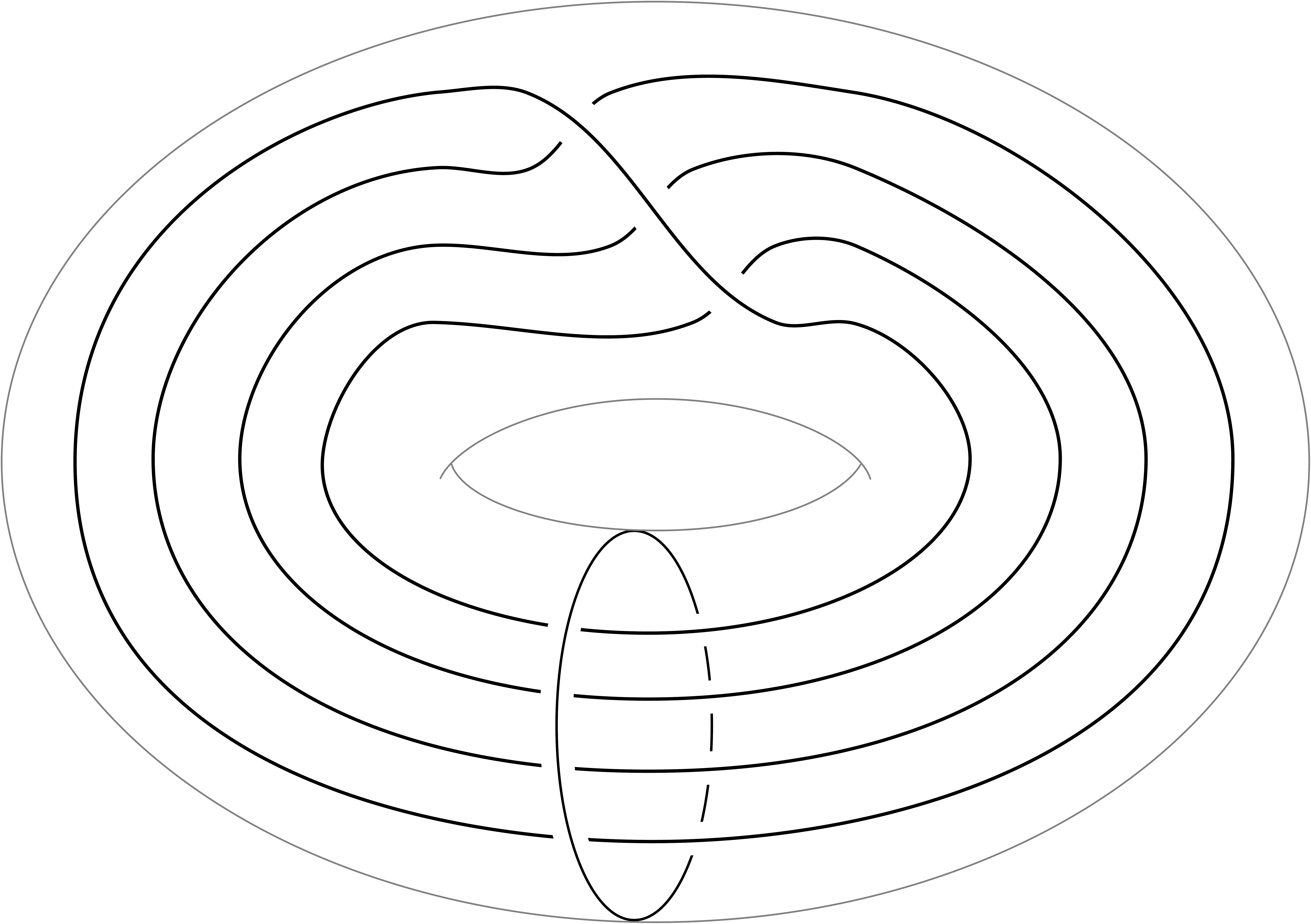\hspace{1cm}
\def\svgwidth{0.125\textwidth}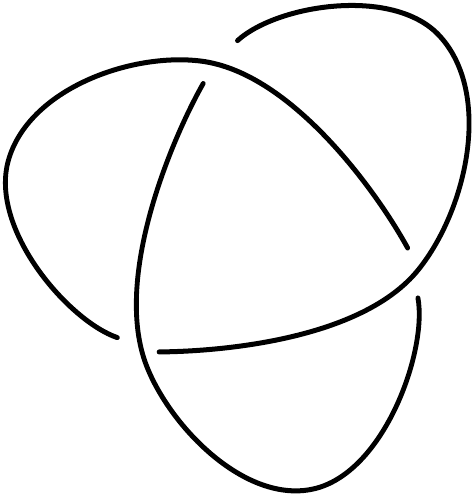\hspace{1cm}
\def\svgwidth{0.125\textwidth}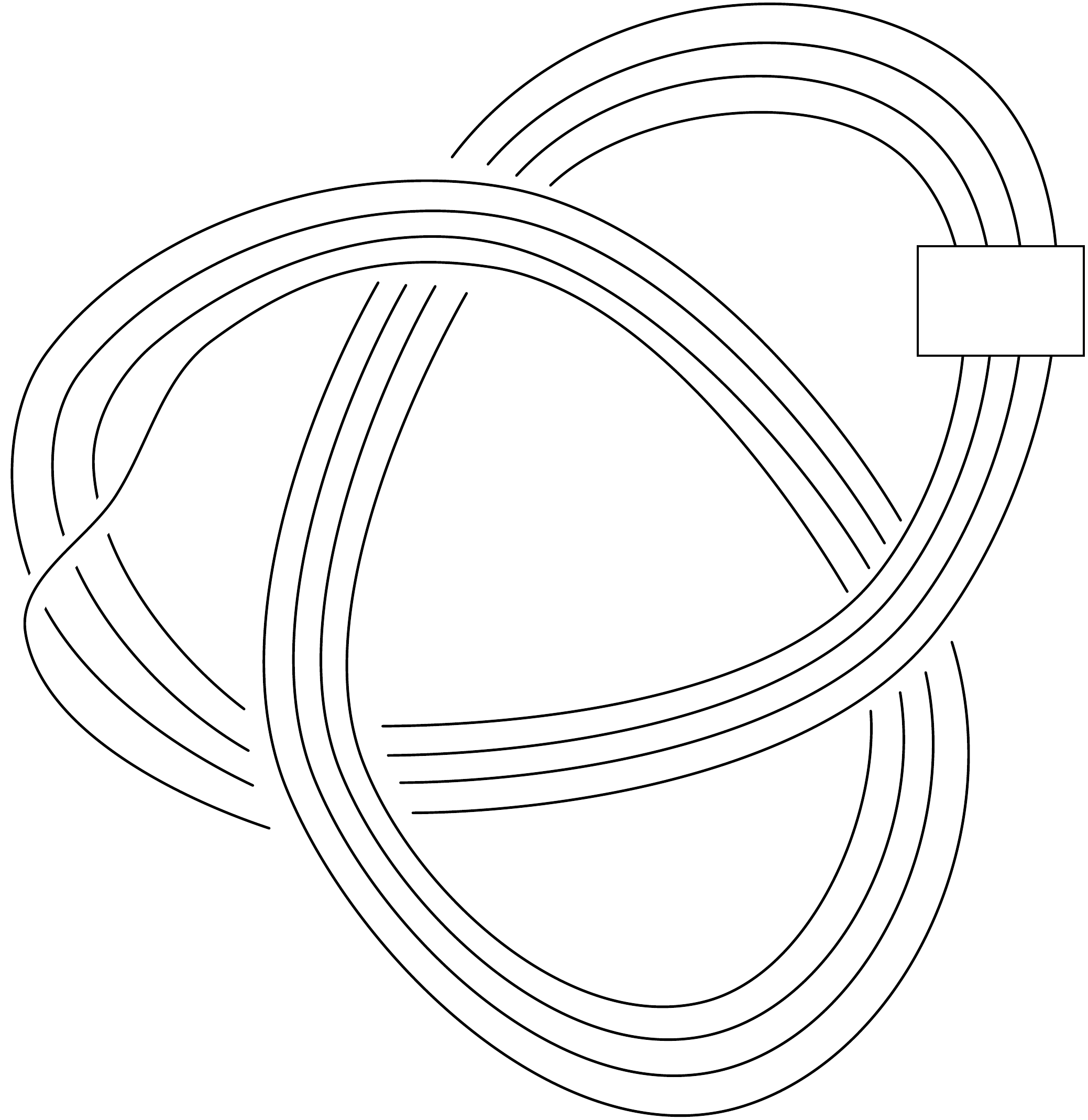
\caption{A pattern $P=C_{4,1}$ with $|w|=4$  (left), a knot $K$ (center), and the  satellite $P(K)$ (right). The box on the right indicates three negative full twists.}\label{fig:4-1-cableoftrefoil}
\end{figure}

Unsurprisingly, the 4-dimensional situation is more complicated. We remind the reader that the topological 4-genus of $K$, denoted $\g(K)$, is the minimal genus of any locally flatly embedded orientable surface in $B^4$ with boundary $K$, and the smooth 4-genus $\gsm(K)$ is analogously defined. It is not hard to show that a bound
\[g_4(P(K)) \leq g_4(P)+ |w|g_4(K)\]
 holds in both categories, where we emphasize that $g_4(P)$ is a version of the 4-genus for the pattern $P$ and is generally strictly larger than $g_4(P(U))$. In the smooth category, the  naive expectation that $\gsm(P(K))$ is approximately $|w| \gsm(K)$ often holds: for any winding number $w$ pattern $P$, we have
\begin{align}\label{eqn:smoothlimit}\displaystyle \lim_{n \to \infty} \frac{ \gsm(P(T_{2,2n+1}))}{\gsm(T_{2,2n+1})}=|w|.\end{align}
Moreover, for any $w, m \in \mathbb{N}$ there exists a winding number $w$ pattern $Q=Q_{w,m}$ and infinitely many knots $J$ such that
\begin{align} \label{eqn:specialpattern}\gsm(Q(J)) =\left(\gsm(Q(U))+ |w| \gsm(J)\right)+m.\end{align}
We expect that these observations are known to the experts, but for completeness we prove them in \Cref{sec:Exs}.\\

The satellite operation seems to affect $\g$  very differently.  In this paper we give evidence for the  surprising idea that the winding number of $P$ essentially does not contribute to $\g(P(K))$.

\begin{conj}\label{conj:(1)holds}
For any pattern $P$ and knot $K$,
$
\g(P(K)) \leq \g(P(U))+ \g(K).
$
\end{conj}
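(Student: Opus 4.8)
\section*{Proof proposal for \texorpdfstring{\Cref{conj:(1)holds}}{the Conjecture}}

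The plan is to reduce \Cref{conj:(1)holds} to the analogous inequality for a weaker invariant, the $\mathbb{Z}$-slice genus $\gZ$ — where the ambient $B^4$ is relaxed to an arbitrary topological $\Z$-homology $4$-ball with boundary $S^3$ — together with the (conjectural) identity $\gZ=\g$ for satellite knots. Since $\gZ\le\g$ trivially, this reduction loses nothing, and the point is that the satellite inequality is accessible for $\gZ$ while the passage back to $\g$ is not.

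First I would establish $\gZ(P(K))\le\gZ(P(U))+\gZ(K)$ by a four-dimensional version of the classical satellite (``infection'') construction. Fix locally flat surfaces $\Sigma_0$ for $P(U)$ and $F$ for $K$, of genera $\gZ(P(U))$ and $\gZ(K)$, inside $\Z$-homology $4$-balls. Let $\gamma\subset S^3\smallsetminus P(U)$ be the core of the solid torus defining $P$, so that $P(K)$ is obtained from $P(U)$ by infection along $\gamma$ with $K$; here $[\gamma]=w\,[\mu]$ in $H_1$ of the exterior of $\Sigma_0$, which is $\Z$ generated by a meridian $\mu$ of $P(U)$, so $\gamma$ cannot be made disjoint from $\Sigma_0$ and the infection genuinely alters the $4$-manifold. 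I would cut the $\Z$-homology ball containing $\Sigma_0$ along a $3$-manifold pushed in from $S^3$ and glue in the exterior of $F$ using the meridian-to-longitude identification of the infection; a Mayer--Vietoris argument shows the resulting compact manifold $W$ is again a $\Z$-homology ball with $\partial W=S^3$, and $\Sigma_0$ with $F$ cabled into it becomes a locally flat surface $\Sigma\subset W$ with $\partial\Sigma=P(K)$ and $g(\Sigma)=g(\Sigma_0)+g(F)=\gZ(P(U))+\gZ(K)$. Combined with $\gZ(P(U))\le\g(P(U))$ and $\gZ(K)\le\g(K)$, this gives $\gZ(P(K))\le\g(P(U))+\g(K)$.

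The decisive remaining step, and the main obstacle, is to upgrade $W$ to $B^4$ — equivalently, to prove $\gZ(P(K))=\g(P(K))$ for every satellite. A compact topological $4$-manifold with the integral homology of $B^4$ and boundary $S^3$ is homeomorphic to $B^4$ once its fundamental group is trivial: glue on a $B^4$, invoke Freedman's topological Poincar\'e conjecture in dimension four to identify the closed-up manifold with $S^4$, and remove an open ball. So the task is to arrange $\pi_1(W)=1$. By van Kampen, $\pi_1(W)$ is assembled from $\pi_1$ of the exterior of $\Sigma_0$ and a quotient of the knot group of $K$, amalgamated over the infection torus, and for a general pattern there is no reason this group vanishes; one would need to run the infection more cleverly — choosing the cut-and-paste locus, and using Freedman's disk embedding theorem, so as to kill the offending loops and land in a genuine $B^4$. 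For sufficiently simple patterns the output $W$ can be identified with $B^4$ directly, and this is how the cable applications (such as $\g\le 1$ for the $(n,1)$-cable of a $3$-genus $1$ knot, where $P(U)=U$) are obtained. Controlling $\pi_1(W)$ for arbitrary $P$ is precisely the gap between the theorems of this paper and \Cref{conj:(1)holds}.
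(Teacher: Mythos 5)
First, a point of framing: \Cref{conj:(1)holds} is stated in the paper as a conjecture, and the paper does not prove it. What the paper actually proves is \Cref{thm:mainIntro}, the analogous inequality for the invariant $\gZ$ as defined there (minimal genus of a locally flat surface in $B^4$ whose complement has fundamental group $\Z$), and this is done algebraically: via the algebraic genus $\galg$ and Seifert-matrix manipulations in \Cref{prop:maininequality}, with the key step being that the $|w|$-fold block sum of an Alexander-trivial matrix is again Alexander trivial (Litherland's formula $\Delta_{C_{|w|,1}(J)}(t)=\Delta_J(t^{|w|})=1$). Your proposal is not a proof of the conjecture either --- you say so yourself at the end --- but beyond that admitted gap, the reduction you set up is flawed in ways that matter.

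Concretely: (1) You have redefined $\gZ$ as a minimal genus over arbitrary $\Z$-homology $4$-balls with boundary $S^3$. That invariant satisfies $\gZ\leq\g$, so even if your infection construction worked, the conclusion $\gZ(P(K))\leq\g(P(U))+\g(K)$ gives no upper bound on $\g(P(K))$ at all until you upgrade the ambient manifold $W$ to $B^4$, which is exactly the step you cannot do; with the paper's definition the inequality $\g\leq\gZ$ goes the other way, which is what makes \Cref{thm:mainIntro} useful, but then your claim ``$\gZ(P(U))\leq\g(P(U))$'' is false in general. Either way your chain of inequalities does not close. (2) The geometric heart of your argument --- that cutting along a pushed-in $3$-manifold and gluing in the exterior of $F$ produces a surface for $P(K)$ of genus $g(\Sigma_0)+g(F)$, using a \emph{single} copy of $F$ --- is unjustified and almost certainly false as stated. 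Any honest version of the geometric satellite construction (as in the proof of \Cref{prop:maininequality}, or Lickorish's Seifert-surface construction) uses $|w|$ parallel copies of the surface for $K$ and yields genus roughly $g(P(U))+|w|\,g(K)$; eliminating the factor $|w|$ is precisely the content of the paper's theorem, and it is achieved there by Seifert-matrix algebra, not by cut-and-paste. Indeed, your construction as described is category-independent (Mayer--Vietoris plus gluing, with no Freedman input until the final $\pi_1$ step), so if it worked it would also produce, smoothly, a genus $\gsm(P(U))+\gsm(K)$ surface for $C_{n,1}(T_{2,3})$ in a homology ball --- contradicting the $\tau$-type genus bounds that persist in (rational) homology balls and give genus $n$ there, in line with \Cref{rem:tauqp}. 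So the missing idea is not merely ``control $\pi_1(W)$''; the intermediate inequality itself needs the algebraic (or Blanchfield-pairing) argument of the paper, and the passage from $\gZ$ to $\g$ in the conjecture remains genuinely open.
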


The simplest lower bounds on the topological 4-genus of a knot come from Tristam-Levine signatures, and we see in \Cref{sec:CG} that the satellite formula of Litherland \cite{Litherland_satellite} quickly implies that the lower bound for $\g$ given by the Tristram-Levine signature  cannot be used to establish that a pair $P$ and $K$ fails to satisfy \Cref{conj:(1)holds}.  As further evidence towards \Cref{conj:(1)holds}, in \Cref{sec:CG} we also consider Gilmer's lower bound for $\g$  \cite{Gilmer_slicegenus} in terms of Casson-Gordon signature invariants \cite{cg-slice,cg-cob}, and in \Cref{prop:nocg} we show that this bound cannot be used to show that a pair $(P,K)$ fails to satisfy \Cref{conj:(1)holds}.

\subsection*{Main result: an inequality for the $\Z$-slice genus of satellites}
We show that the inequality of \Cref{conj:(1)holds} holds for the \emph{topological $\Z$-slice genus} $\gZ$, an analog of $\g$. The $\Z$-slice genus 
is defined as
\[\gZ(K):= \min\left\{\mathrm{genus}(F)\, : \,
F\hookrightarrow B^4 \text{ is an oriented locally-flat surface with }\partial F= K\text{ and } \pi_1(B^4 \setminus F) \cong \Z \right\}.\]
Observe that $\g\leq  \gZ$ by definition and that $\gZ\leq g_3$ since the complement of a Seifert surface that was properly pushed into the 4-ball has fundamental group $\Z$ (see \cite[Proposition 6.2.1]{GompfStipsicz} or \cite[Proof of Theorem 1]{FellerLewark_16} for more details). Notice also that $\gZ(K)=0$ if and only if $\Delta_K(t)=1$~\cite[Theorem~1.13]{Freedman_82_TheTopOfFour-dimensionalManifolds}.\\

Our main theorem reads as follows.
\begin{thm}\label{thm:mainIntro}
  For any pattern $P$ and  knot $K$,
  $\gZ(P(K))\leq \gZ(P(U))+\gZ(K)$.
\end{thm}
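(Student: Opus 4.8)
The strategy is to reduce \Cref{thm:mainIntro} to an inequality about Seifert forms. Using the identity $\gZ=\galg$ with the algebraic genus $\galg$ \cite{FellerLewark_16}, it suffices to prove
\[
\galg(P(K))\le\galg(P(U))+\galg(K),
\]
where $\galg(J)$ is the minimum, over Seifert surfaces $\Sigma$ of $J$, of $g(\Sigma)-\rho(\Sigma)$, and $\rho(\Sigma)$ denotes the largest rank of a primitive subgroup $H\le H_1(\Sigma)$ on which the Seifert form vanishes identically. (The substantive half of the identity, $\gZ\le\galg$, is Freedman's theorem: one compresses $\Sigma$ along the curves of $H$ inside $B^4$, which is possible only topologically, since the required embedded disks in $B^4$ are produced by the disk embedding theorem for the good group $\Z$.)

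To prove the inequality, fix Seifert surfaces $\Sigma_0$ of $P(U)$ and $\Sigma_1$ of $K$ realizing $\galg(P(U))$ and $\galg(K)$, with primitive isotropic subgroups $H_0\le H_1(\Sigma_0)$ and $H_1\le H_1(\Sigma_1)$. After an isotopy and harmless stabilizations of $\Sigma_0$ (each stabilization adds a tube whose meridian is an isotropic class, so $g(\Sigma_0)-\rho(\Sigma_0)$ is unchanged and we enlarge $H_0$ to absorb the new classes), we may assume that $\Sigma_0$ meets, in exactly $|w|$ meridian disks, a tubular neighborhood of the curve $\eta\subset S^3\setminus P(U)$ whose exterior is replaced by the exterior of $K$ when forming $P(K)$ (here $w$ is the winding number of $P$), and that $H_0$ is supported away from this neighborhood. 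Choose a symplectic basis $\{a_1,b_1,\dots,a_h,b_h\}$ of $H_1(\Sigma_1)$, $h=g(\Sigma_1)$, adapted so that $H_1=\langle a_1,\dots,a_\rho\rangle$. By Litherland's description of satellite Seifert surfaces \cite{Litherland_satellite}, $P(K)$ bounds a Seifert surface $\Sigma$ obtained from $\Sigma_0$ by deleting the $|w|$ disks and capping off with $|w|$ parallel copies $\Sigma_1^{(1)},\dots,\Sigma_1^{(|w|)}$ of $\Sigma_1$; an Euler characteristic count gives $g(\Sigma)=g(\Sigma_0)+|w|\,h$.

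It remains to produce a primitive subgroup $H\le H_1(\Sigma)$ with $V_\Sigma|_{H}=0$ of rank $\rho(\Sigma_0)+(|w|-1)h+\rho$; then $\galg(P(K))\le g(\Sigma)-\operatorname{rank}H=(g(\Sigma_0)-\rho(\Sigma_0))+(h-\rho)$, as desired. I take $H$ spanned by $H_0$, the difference classes $a_j^{(i)}-a_j^{(i+1)}$ for $1\le j\le h$ and $1\le i\le|w|-1$, and $a_1^{(|w|)},\dots,a_\rho^{(|w|)}$. Each difference class is represented by a simple closed curve bounding an embedded disk in $S^3$ disjoint from the interior of $\Sigma$ (built from the product annulus between two parallel copies), so its Seifert pairings with the other listed classes either vanish outright or reduce to intersection numbers $a_j\cdot a_k$ and $a_j\cdot a_l$, all of which are zero — for the $a_j\cdot a_l$ this is precisely why the basis was adapted to $H_1$ — while the pairings among $a_1^{(|w|)},\dots,a_\rho^{(|w|)}$ vanish because $H_1$ is isotropic, and $H_0$ pairs trivially with everything else by the support condition. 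Primitivity follows from a Mayer--Vietoris computation identifying $H_1(\Sigma)$ with $H_1(\Sigma_0)\oplus\bigoplus_i H_1(\Sigma_1^{(i)})$, together with the observation that $\{a_j^{(i)}-a_j^{(i+1)}\}_{i}\cup\{a_j^{(|w|)}\}$ is a unimodular change of basis from $\{a_j^{(i)}\}_i$. The main obstacle is the precise bookkeeping of the Seifert pairings among the parallel copies — keeping track of which entries equal $V_{\Sigma_1}(x,y)$ and which equal $V_{\Sigma_1}(y,x)$ — and checking that the compatibility arrangement of $\Sigma_0$ does not disturb the isotropy. It is exactly this interaction among the $|w|$ parallel copies, encoded by the ``metabolic'' difference classes, that makes the winding number $w$ drop out of the bound, in contrast to its $|w|$-fold contribution to $g_3$ and $\gsm$.
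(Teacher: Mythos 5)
Your overall strategy --- reduce to $\galg$, build the satellite Seifert surface from a surface for $P(U)$ with $|w|$ parallel copies of a surface for $K$, and exploit the block structure of the resulting Seifert matrix --- is exactly the paper's strategy, and the stabilization step ensuring $\Sigma_0$ meets $N(\eta)$ in $|w|$ coherently-oriented disks also matches (this is Lemma~\ref{lemma:NiceSSforP}). But there is a serious error in the foundation.

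Your stated definition of $\galg$ is wrong, and the gap is not cosmetic. You define $\galg(J)$ as $\min_\Sigma\bigl(g(\Sigma)-\rho(\Sigma)\bigr)$ where $\rho(\Sigma)$ is the largest rank of a primitive subgroup $H\le H_1(\Sigma)$ on which the Seifert form \emph{vanishes}, and you justify $\gZ\le$ (this quantity) by ``compressing along the curves of $H$ using the disk embedding theorem.'' That cannot be right: if it were, then every algebraically slice knot (genus-$g$ surface carrying a rank-$g$ isotropic subgroup) would be topologically slice, contradicting Casson--Gordon. Vanishing of the Seifert form on $H$ only says the curves in $H$ form a $0$-framed link of pairwise linking number zero; that is nowhere near a sufficient hypothesis for the disk embedding theorem to produce disjoint embedded disks in $B^4\smallsetminus\Sigma$. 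The actual Feller--Lewark definition, which the paper uses, requires a $2n\times 2n$ sub-block $B$ of the Seifert matrix that is \emph{Alexander trivial}, i.e.\ $\det(tB-B^T)=t^n$ --- the Seifert matrix of a knot with trivial Alexander polynomial --- and it is precisely Freedman's $\Delta=1$ theorem, not the bare disk embedding theorem applied to curves with vanishing Seifert form, that then lets one do the ambient surgery. A zero block is not Alexander trivial (its $\det(tB-B^T)$ is $0$, not $t^n$), so producing a large isotropic subgroup, which is all your construction verifies, does not produce the block the definition asks for.

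Because of this, the linear algebra you do is checking the wrong condition. Finding $H_0$, the difference classes $a_j^{(i)}-a_j^{(i+1)}$, and $a_1^{(|w|)},\dots,a_\rho^{(|w|)}$ with pairwise-vanishing Seifert pairings gives a large zero sub-block; it does not give a large Alexander-trivial sub-block. The paper's proof of Proposition~\ref{prop:maininequality} keeps the analogous pieces (the block-triangular form of $|w|V_2$ after congruence, with the same $I_g$/$-I_g$ pattern that your difference classes encode) but then has to actually compute $\det\bigl(tX_\Delta-X_\Delta^T\bigr)$ by a chain of determinant reductions and show it equals $\det(t(|w|A_2)-(|w|A_2)^T)=\Delta_{C_{|w|,1}(J)}(t)=1$ via Litherland's formula, where $A_2$ is the Alexander-trivial block of the Seifert matrix of $K$. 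That extra determinant computation is the mathematical content you have omitted, and it is not a formality: it is exactly where the Alexander-trivial hypothesis on $A_2$ (as opposed to mere isotropy) is used. To repair your argument you would need to replace ``largest isotropic subgroup'' with ``largest Alexander-trivial block'' throughout and then redo the verification that the sub-block of the satellite Seifert matrix obtained by deleting appropriate rows and columns is Alexander trivial, which amounts to the calculation in the paper.
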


In fact, our proof of \Cref{thm:mainIntro} implies that when $w(P)=0$ we obtain $\gZ(P(K)) = \gZ(P(U))$ and when $w(P)= \pm1$ we have $\gZ(P(K))= \gZ(P(U) \#K)$. This second fact is interesting given that an unresolved problem asks whether $P(K)$ and $P(U)\#K$ must be  topologically concordant when $w(P)=+1$. However, we think that the result is most surprising for $|w(P)|>1$, where it stands in contrast with smooth results such as~\eqref{eqn:smoothlimit} and~\eqref{eqn:specialpattern}.
\\

Notice that \Cref{thm:mainIntro} immediately gives upper bounds for the topological 4-ball genus of a satellite knot. For example,  we have the following unexpected result.

\begin{Example}[The $(n,1)$-cable of the trefoil.]\label{Ex:trefoilandfig8}
For a knot $K$ and $n>0$, let $C_{n,1}(K)$ denote the $(n,1)$-cable of $K$ and observe that \Cref{thm:mainIntro} implies that
\[\g(C_{n,1}(K))\leq \gZ(C_{n,1}(U))+ \gZ(K)\leq g_3(K).\]
A simple Tristram-Levine signature computation at an appropriate $\omega \in S^1$ (see the proof of \Cref{cor:simpleg4equality}) shows that
$C_{n,1}(T_{2,3})$ is not slice and so $\g(C_{n,1}(T_{2,3}))=1$ for all $n>0$.
This is particularly surprising given that $\g(T_{2,3})= \gsm(T_{2,3})=1$ and $\gsm(C_{n,1}(T_{2,3}))=n$.
\end{Example}

We also obtain the following explicit difference with~\eqref{eqn:smoothlimit}, which we prove in \Cref{sec:Exs}.
\begin{cor}\label{cor:limitis1intop}
Let $P$ be a pattern of winding number $w$. Then
$$ \displaystyle \lim_{n \to \infty} \frac{ \g(P(T_{2,2n+1}))}{\g(T_{2,2n+1})}=\left\{\begin{array}{cc}1, & w\neq 0 \\ 0, & w=0 \end{array} \right..$$
\end{cor}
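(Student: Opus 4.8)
The plan is to prove the two cases separately, using \Cref{thm:mainIntro} (via \Cref{Ex:trefoilandfig8}) for the upper bound and Tristram--Levine signatures for the lower bound. First recall that $\g(T_{2,2n+1}) = \gsm(T_{2,2n+1}) = n$, so the denominator grows linearly and we must understand the numerator up to additive constants.

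\emph{Case $w = 0$.} Here I would invoke the remark following \Cref{thm:mainIntro}: when $w(P) = 0$ one has $\gZ(P(K)) = \gZ(P(U))$ for every $K$, hence in particular $\g(P(T_{2,2n+1})) \leq \gZ(P(T_{2,2n+1})) = \gZ(P(U))$ is bounded independently of $n$. Dividing by $\g(T_{2,2n+1}) = n \to \infty$ gives limit $0$. (If one prefers not to quote the remark, the same bound follows directly from \Cref{thm:mainIntro} together with $\gZ(U) = 0$, since $\Delta_U(t) = 1$.)

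\emph{Case $w \neq 0$.} The upper bound $\g(P(T_{2,2n+1})) \leq \gZ(P(U)) + \gZ(T_{2,2n+1}) = \gZ(P(U)) + n$ is immediate from \Cref{thm:mainIntro} and $\gZ(T_{2,2n+1}) = g_3(T_{2,2n+1}) = n$; dividing by $n$ and letting $n \to \infty$ gives $\limsup \leq 1$. For the matching lower bound I would use Litherland's satellite formula \cite{Litherland_satellite} for Tristram--Levine signatures: for $\om \in S^1$, $\sigma_{P(K)}(\om) = \sigma_{P(U)}(\om) + \sigma_K(\om^{w})$. The torus knot $T_{2,2n+1}$ has $\sigma_{T_{2,2n+1}}(\om)$ roughly equal to $-2n$ on a fixed arc of the circle bounded away from $1$ (more precisely, the integral of the signature function, or the value at a generic fixed point $\om_0$, is of order $-2n$); since the bounded correction term $\sigma_{P(U)}(\om_0^{?})$ — here evaluated with the substitution $\om \mapsto \om^w$ keeping $\om_0^w$ a fixed root of unity where $\sigma_{T_{2,2n+1}}$ is large — is independent of $n$, we get $|\sigma_{P(T_{2,2n+1})}(\om_0)| \geq 2n - C$ for some constant $C = C(P)$ and all large $n$. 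The standard signature lower bound $\g(J) \geq \tfrac12 |\sigma_J(\om)|$ (valid at prime-power roots of unity, or more carefully at any $\om$ where the bound is known to hold topologically — see \cite{Litherland_satellite} and the discussion in \Cref{sec:CG}) then yields $\g(P(T_{2,2n+1})) \geq n - C/2$, so $\liminf \geq 1$. Combining the two bounds gives the limit $1$.

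The main obstacle is the bookkeeping in Case $w \neq 0$: one must choose $\om_0$ to be a prime-power root of unity (so the topological signature bound applies) at which $\sigma_{T_{2,2n+1}}(\om_0^w)$ is genuinely of size $\asymp -2n$ — for instance a fixed $\om_0$ with $\om_0^w$ lying in the arc near $-1$ where the torus knot signature jumps accumulate. Because $\om_0$ can be fixed once and for all (the correction term depends only on $P$, not $n$), and because one only needs the weak estimate $|\sigma_{T_{2,2n+1}}(\om_0^w)| \geq 2n - O(1)$ rather than an exact formula, this is routine; the same computation already underlies the non-sliceness assertion in \Cref{Ex:trefoilandfig8} and is carried out in the proof of \Cref{cor:simpleg4equality}. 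No step requires the full strength of \Cref{thm:mainIntro} beyond the clean inequality it provides.
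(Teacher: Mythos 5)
Your overall architecture matches the paper's: the $w=0$ case is handled by the remark after \Cref{thm:mainIntro} (equivalently by \Cref{prop:maininequality}), and for $w\neq 0$ the upper bound comes from \Cref{thm:mainIntro} and $\gZ(T_{2,2n+1})=g_3(T_{2,2n+1})=n$, while the lower bound comes from Litherland's formula \eqref{eq:sigformulaforsat} and the signature lower bound \eqref{eq:siglowerboundg4}. The $w=0$ case and the upper bound are correct. However, there is a genuine gap in your lower bound for $w\neq 0$, and it is not a mere bookkeeping issue.

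The problem is the choice of a \emph{fixed} $\omega_0$. You claim that with $\omega_0$ fixed and $\omega_0^w$ ``lying in the arc near $-1$'' one gets $|\sigma_{T_{2,2n+1}}(\omega_0^w)| \geq 2n - O(1)$, but this estimate is false. Write $\omega_0^w = e^{2\pi i x_0}$ with $x_0 \in (0,1/2)$ fixed. The jump points of the signature function of $T_{2,2n+1}$ are the $(4n+2)$-nd roots of unity other than $\pm 1$, and the signature at $e^{2\pi i x_0}$ is $-2\bigl\lfloor (2n+1)x_0+\tfrac12\bigr\rfloor + O(1) = -(2n+1)\cdot 2x_0 + O(1)$. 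Unless $x_0 = \tfrac12$ exactly (i.e.\ $\omega_0^w = -1$), the deficit from $-2n$ grows \emph{linearly} in $n$: at fixed $\omega_0^w = e^{2\pi i(1/2-\delta)}$ you get $|\sigma_{T_{2,2n+1}}(\omega_0^w)| \approx 2n(1-2\delta)$, which yields only $\liminf \geq 1-2\delta < 1$. The ``$\asymp -2n$'' estimate you invoke is too weak; you need $|\sigma| = 2n - O(1)$. The only way to get that with a single fixed evaluation point is to force $\omega_0^w = -1$ exactly, which restricts $\omega_0$ to primitive $2w'$-th roots of unity with $w'\mid w$; these are usually not prime-power order, and moreover the requirement that $\omega_0$ avoid the roots of $\Delta_{P(U)}$ (so that the topological signature bound applies to $P(T_{2,2n+1})$) is not automatic. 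Perturbing $\omega_0$ to avoid those roots reintroduces the linear-in-$n$ deficit. The paper sidesteps all of this by letting the evaluation point depend on $n$: it chooses $s_n$ in the shrinking interval $\bigl(\tfrac{(2n-1)\pi}{(2n+1)w},\tfrac{(2n+3)\pi}{(2n+1)w}\bigr)$, which is exactly the preimage under $\theta\mapsto w\theta$ of the open arc around $\pi$ containing no jump of $\sigma_{T_{2,2n+1}}$, so that $\sigma_{e^{iws_n}}(T_{2,2n+1}) = -2n$ on the nose, and since this interval is open one can pick $s_n$ avoiding the finitely many roots of $\Delta_{P(U)}$. To repair your argument, replace the fixed $\omega_0$ by such a sequence $e^{is_n}\to e^{i\pi/w}$; one could alternatively work with the averaged (one-sided limit) signature at $e^{i\pi/w}$, but that requires a more careful justification of the satellite formula and the genus bound for averaged signatures.
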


As another explicit example of the difference between the smooth and topological categories, in \Cref{Ex:2cablesof2strandedtorusknots}, we see that iterative 2-cabling of $T_{2,p}$ torus knots yields families of knots $K_n$ that are closures of positive braids for which \Cref{thm:mainIntro} immediately shows $\lim_{n\to\infty}\frac{\g(K_n)}{\gsm(K_n)}\leq\frac{2}{3}$.
Previous work on the ratio between the smooth and topological genera of positive braid closures has relied on explicit example-based calculations, see~\cite{Rudolph_84_SomeTopLocFlatSurf,BaaderFellerLewarkLiechti_15},
but our arguments allow us to improve previous bounds without  computing specific Seifert matrices.
\\

We remark that the optimal upper bound for $\g$ coming from \Cref{thm:mainIntro} is
\[ \g(P(K)) \leq \gZ(P(U))+ \min\{|w|, 1\} \gZ^c(K),\]
where $\gZ^c(K)$ is the concordance $\Z$-slice genus of $K$, that is, the minimal $\gZ(J)$ over all knots $J$ topologically concordant to $K$. This follows immediately from the observation that if $K$ and $J$ are topologically concordant, then $P(K)$ and $P(J)$ are also topologically concordant and  so $\g(P(K))$ equals $\g(P(J))$.
\\

Many particularly nice examples, including \Cref{Ex:trefoilandfig8}, fall into the following setting.
\begin{cor}\label{cor:unknottedpattern}
For every knot $K$ and pattern $P$ with $\Delta_{P(U)}(t)=1$, we have $\g(P(K))\leq g_3(K)$.
\end{cor}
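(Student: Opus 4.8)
The plan is to derive \Cref{cor:unknottedpattern} as an immediate consequence of \Cref{thm:mainIntro} together with the two basic inequalities recorded in the excerpt: $\g \leq \gZ \leq g_3$, and the Freedman-type fact that $\gZ(J) = 0$ precisely when $\Delta_J(t) = 1$. Since the hypothesis is $\Delta_{P(U)}(t) = 1$, the first step is to invoke \cite[Theorem~1.13]{Freedman_82_TheTopOfFour-dimensionalManifolds} to conclude $\gZ(P(U)) = 0$; in other words, $P(U)$ bounds a locally flat disk in $B^4$ whose complement has fundamental group $\Z$.

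Next I would apply \Cref{thm:mainIntro} to the pattern $P$ and the knot $K$, which gives
\[
\gZ(P(K)) \leq \gZ(P(U)) + \gZ(K) = 0 + \gZ(K) = \gZ(K).
\]
Then, combining this with the inequality $\g \leq \gZ$ on the left and $\gZ \leq g_3$ applied to $K$ on the right, I obtain
\[
\g(P(K)) \leq \gZ(P(K)) \leq \gZ(K) \leq g_3(K),
\]
which is exactly the claimed bound.

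Since every ingredient is either quoted verbatim from the discussion preceding \Cref{thm:mainIntro} or is the main theorem itself, there is essentially no obstacle here; the only point that warrants a sentence of care is making sure the reader recalls why $\gZ \leq g_3$, namely that pushing an interior of a minimal-genus Seifert surface for $K$ into $B^4$ yields a locally flat surface whose complement has infinite cyclic fundamental group (as noted via \cite[Proposition 6.2.1]{GompfStipsicz}). No genuinely hard step remains; the corollary is a clean specialization of the main theorem to the case where the pattern applied to the unknot has trivial Alexander polynomial.
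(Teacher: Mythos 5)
Your argument is correct and is exactly the one the paper intends (the paper leaves \Cref{cor:unknottedpattern} without an explicit proof precisely because it is the immediate combination of $\gZ(P(U))=0$ from $\Delta_{P(U)}(t)=1$, \Cref{thm:mainIntro}, and the chain $\g\leq\gZ\leq g_3$). No gaps, and nothing to add.
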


Litherland's formula for the Tristram-Levine signature of a satellite knot allows us to construct many examples where the inequality of \Cref{cor:unknottedpattern} becomes equality.

\begin{cor}\label{cor:simpleg4equality}
Let $P$ be a pattern of nonzero winding number with $\Delta_{P(U)}(t)=1$.
Let $K$ be any knot such that ${g_3}(K)= |2\sigma_{\omega}(K)|$ for some  $\omega \in S^1$ with
$\Delta_K(\omega) \neq 0$. Then
$\g(P(K))=g_3(K)=\g(K)$.
\end{cor}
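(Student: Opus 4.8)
The plan is to pin down $\g(K)$ and $\g(P(K))$ by trapping each between the upper bound already supplied by \Cref{cor:unknottedpattern} and a matching lower bound coming from a Levine--Tristram signature. Write $w\neq 0$ for the winding number of $P$. The upper bounds come for free: pushing a minimal genus Seifert surface of $K$ into $B^4$ gives $\g(K)\le g_3(K)$, and since $\Delta_{P(U)}(t)=1$ we get $\g(P(K))\le g_3(K)$ directly from \Cref{cor:unknottedpattern} (this is where \Cref{thm:mainIntro} is invoked). So it remains to establish the two lower bounds $\g(K)\ge g_3(K)$ and $\g(P(K))\ge g_3(K)$.

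The bound for $K$ is just a reformulation of the hypothesis: for any knot $J$ and any $\eta\in S^1$ with $\Delta_J(\eta)\neq 0$, the Levine--Tristram signature $\sigma_\eta(J)$ provides a lower bound for $\g(J)$, and the assumption ${g_3}(K)=|2\sigma_{\omega}(K)|$ says precisely that this lower bound, evaluated at $\omega$, equals $g_3(K)$. Hence $\g(K)\ge g_3(K)$, so $\g(K)=g_3(K)$.

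For the satellite I would transport this signature through Litherland's formula~\cite{Litherland_satellite}, $\sigma_\eta(P(K))=\sigma_\eta(P(U))+\sigma_{\eta^{w}}(K)$, valid at $\eta$ where the relevant signatures are defined. The pattern drops out for two reasons. First, $\Delta_{P(U)}(t)=1$ forces $P(U)$ to be topologically slice \cite{Freedman_82_TheTopOfFour-dimensionalManifolds}, so $\sigma_\eta(P(U))=0$ for every $\eta\in S^1$, and there is no issue of definition since $\Delta_{P(U)}(\eta)=1\neq 0$. Second, the satellite formula for the Alexander polynomial gives $\Delta_{P(K)}(t)\doteq\Delta_{P(U)}(t)\,\Delta_K(t^{w})=\Delta_K(t^{w})$. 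Since $w\neq 0$, the $w$-th power map is onto $S^1$, so I can choose $\omega'\in S^1$ with $(\omega')^{w}=\omega$; for any such choice $\Delta_{P(K)}(\omega')=\Delta_K(\omega)\neq 0$, so $\sigma_{\omega'}(P(K))$ is a bona fide lower bound for $\g(P(K))$, and Litherland's formula collapses to $\sigma_{\omega'}(P(K))=\sigma_\omega(K)$. Plugging this into the signature bound and then using the hypothesis on $K$ yields $\g(P(K))\ge g_3(K)$, hence $\g(P(K))=g_3(K)$. Together with the previous paragraph this is the claimed chain $\g(P(K))=g_3(K)=\g(K)$.

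I do not expect a deep obstacle here; the proof is soft, a bookkeeping assembly of \Cref{thm:mainIntro} (through \Cref{cor:unknottedpattern}), Litherland's satellite signature formula, the satellite Alexander polynomial formula, and the classical signature lower bound for $\g$. The part that actually uses the hypotheses --- and is therefore the step to get right --- is the verification that $\Delta_{P(K)}(\omega')\neq 0$, so that $\sigma_{\omega'}$ legitimately bounds $\g(P(K))$; this is exactly where $\Delta_{P(U)}(t)=1$ and $\Delta_K(\omega)\neq 0$ combine through the satellite Alexander polynomial formula. Relatedly, one should keep track that $\Delta_{P(U)}(t)=1$ is used twice: to make every Levine--Tristram signature of the pattern vanish, and to remove the pattern's Alexander polynomial from that of the satellite.
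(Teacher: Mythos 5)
Your argument is correct and follows essentially the same route as the paper: the upper bound $\g(P(K))\le g_3(K)$ comes from \Cref{cor:unknottedpattern}, and the matching lower bound comes from a Levine--Tristram signature of $P(K)$ at a point where, because $\Delta_{P(U)}(t)=1$, Litherland's satellite formula collapses to $\sigma_\omega(K)$ and the hypothesis on $\sigma_\omega(K)$ then closes the chain. The only cosmetic difference is in the choice of evaluation point: the paper selects a prime-power root of unity $\xi$ near a $|w|$-th root of $\omega$ (so regularity of $\xi$ is automatic and $\sigma_{\xi^{|w|}}(K)=\sigma_\omega(K)$ by local constancy), whereas you evaluate directly at $\omega'$ with $(\omega')^w=\omega$ and invoke the standard form of the signature bound valid wherever $\Delta_{P(K)}$ does not vanish; the two are interchangeable.
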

We remark that the hypothesis on the winding number of $P$ is necessary: if $P$ has winding number $0$ and $\Delta_{P(U)}(t)=1$, then  $P(K)$ has trivial Alexander polynomial and so $\g(P(K))=0$ for any knot $K$.
\begin{proof}
Our assumption on $\sigma_{\omega}(K)$ implies that $\gZ(K)= g_3(K)$, and
\Cref{cor:unknottedpattern} further implies that
\[\g(P(K))\leq \gZ(P(U))+\gZ(K)= \gZ(U)+\gZ(K)=0+g_3(K).
\]
Now let $\xi \in S^1$ be a prime power root of unity such that no root of $\Delta_{P(K)}(t)$ lies between $\xi^n$ and $\omega$, where $n=|w|$ is the absolute value of the winding number of $P$. Observe that
\[2\g(P(K)) \geq |\sigma_{\xi}(P(K))|= |\sigma_{\xi}(P(U))+ \sigma_{\xi^n}(K)| = |0+ \sigma_{\omega}(K)|= 2\g(K). \qedhere
\]
\end{proof}

Unsurprisingly, one can find many examples where the bounds on $\g(P(K))$ coming from \Cref{thm:mainIntro} are far from sharp. For instance if $P$ is a pattern with geometric winding number 1 and such that $\g(P(U))=n$, then
\[0= \g(P(U) \# -P(U))= \g(P(-P(U))< \g(P(U))+ \g(-P(U))=2n.
\]
There are also many examples of pairs $(P,K)$ where the topological 4-genus of $P(K)$ cannot be determined by combining the upper bounds coming from \Cref{thm:mainIntro} with the known lower bounds. We give a particularly interesting family that may relate to \Cref{conj:(1)holds}.

\begin{figure}[h]
\includegraphics[height=3cm]{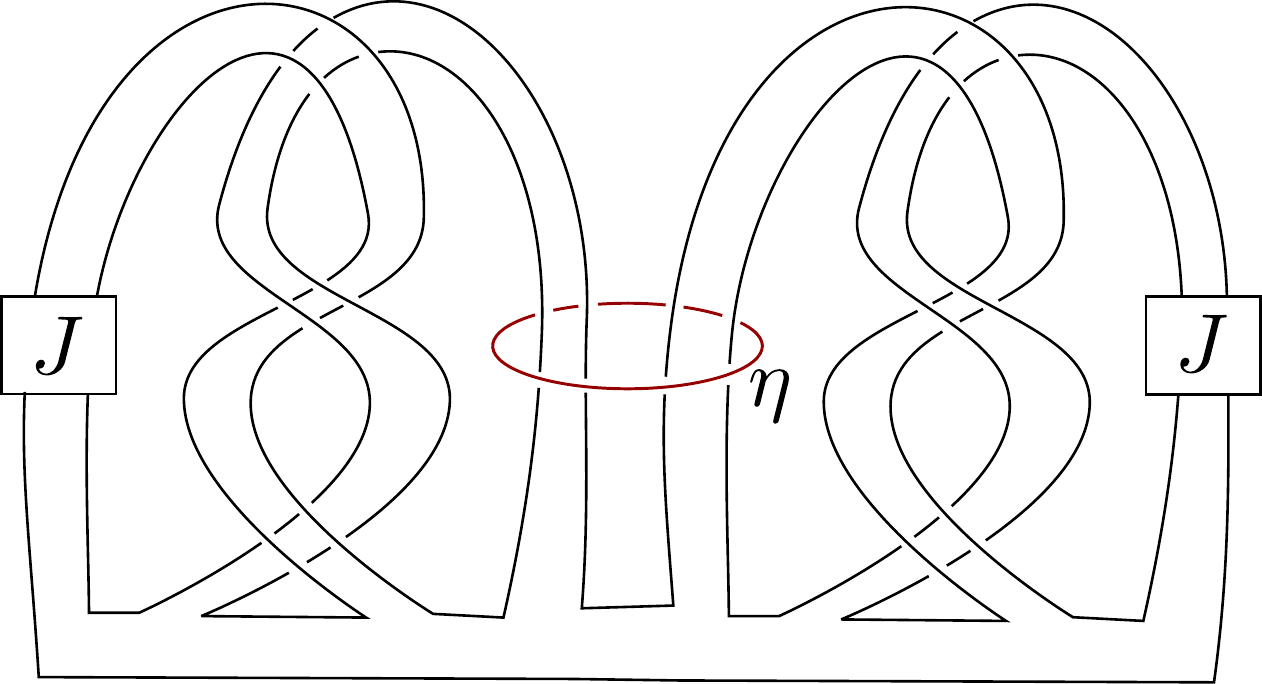}
\caption{\label{fig:PJ}The pattern $P_J$, which depends on the choice of an auxiliary knot $J$ and has algebraic winding number equal to $0$.}
\end{figure}

\begin{Example}\label{exl:pj}
Let $P_J$ be the pattern shown in \Cref{fig:PJ}, described as a knot in the complement of the unknot $\eta$.
Observe that since $P_J(U)$ has $H_1(\Sigma_2(P_J(U))) \cong (\Z_3)^4$, we have that
\[2= \frac{1}{2}(4) \leq \gZ(P_J(U)) \leq g_3(P_J(U))= 2,\]
where for the first inequality we used that half the minimal number of generators for the first homology of the double branched cover of a knot is a lower bound for $\gZ$, see \cite[Proposition~12.ii)]{FellerLewark_16} and \cite[Corollary~1.5]{FellerLewark_19}.
So the best algebraic bound we can obtain is $\g(P_J(K)) \leq 2$, which also follows immediately from considering the genus 2 Seifert surface for $P_J(U)$ in the complement of $\eta$. \Cref{conj:(1)holds} suggests that in fact $$\g(P_J(K)) \leq \min\{2,\g(K)\}.$$
We remark that while for many choices of $J$ (e.g.~$J=\#^n T_{2,3}$ for large $n$) one can use Casson-Gordon signatures to prove that $P_J(T_{2,3})$ is not slice, \Cref{prop:nocg} shows that it is not possible to use Gilmer's version of the Casson-Gordon signature bounds to establish that $\g(P_J(T_{2,3}))>1= \g(P_J(U))+\g(T_{2,3})$.
\end{Example}

Given the gap between the known lower and upper bounds on $\g$ in the case of $P_J(T_{2,3}))$, we propose the following as a stimulus for future work.
\begin{prob}
For some non-slice knot $J$, determine $\g(P_J(T_{2,3})) \in \{1,2\}$.
\end{prob}

\subsection*{Outline of Proof of \Cref{thm:mainIntro}}
Our proof of \Cref{thm:mainIntro} will be Seifert matrix based: we establish the inequality of \Cref{thm:mainIntro} by proving in \Cref{prop:maininequality} that the corresponding inequality for the algebraic genus $\galg$ holds. This latter quantity is defined in terms of S-equivalence classes of links by Feller-Lewark in \cite{FellerLewark_16} and was shown to be equal to the topological $\Z$-slice genus in ~\cite{FellerLewark_19}. While we only considered connected patterns thus far, we note that one could also consider patterns $P$ with multiple components and thus satellites $P(K)$ that are links of multiple components. Our proof of \Cref{prop:maininequality} holds equally well for multi-component patterns. However, we warn the reader that in the setting of multiple components it is known that $\gZ\leq\galg$ but not whether $\gZ=\galg$. As a result, we do not know that \Cref{thm:mainIntro} holds for multi-component patterns.

Duncan McCoy~\cite{McCoy_19} has an alternative proof of \Cref{thm:mainIntro} which relies on his recent work analysing the behavior of $\galg$ under so-called `null homologous twisting operations'. Additionally, in the final stages of the preparation of this manuscript we found yet a third way to prove \Cref{thm:mainIntro} by combining a result of Livingston and Melvin~\cite{LivingstonMelvin85} about the Blanchfield pairing and the recent characterization of $\gZ$ in terms of the Blanchfield pairing given in~\cite[Theorem~1.1]{FellerLewark_19}; see our Blanchfield pairing perspective below.
\\

We end the introduction with remarks on different perspectives on $\g$ and the satellite operation.

\subsection*{The Alexander polynomial perspective.}
Besides the Tristram-Levine signature $\sigma_\omega(K)$ (compare with~\Cref{sec:CG}),
the Alexander polynomial $\Delta_K(t)$  is another classical knot invariant that has a simple behavior with respect to satellite operations and provides bounds (upper rather than lower) for $\g$. Namely, a formula of Litherland \cite{Litherland_satellite} states that for all patterns $P$ and knots $K$
\begin{align}\label{eq:alexsatellite}
\Delta_{P(K)}(t)=\Delta_{P(U)}(t)\Delta_K(t^{|w|}).
\end{align}

In addition, as a consequence of Freedman's Disc Embedding theorem (see~\cite[Theorem~1.13]{Freedman_82_TheTopOfFour-dimensionalManifolds} and~\cite[Theorem~1]{Feller_15_DegAlexUpperBoundTopSliceGenus}), we have $$2\gZ(K)\leq \deg(\Delta_K(t)).$$

Considering the addition formula for the degree of the Alexander polynomial coming from \Cref{eq:alexsatellite} and the relationship of the degree to $\g$ and $\gZ$, it is natural to wonder if it is true that \[\g(P(K))\leq \g(P(U))+|w|\g(K).\]

However, there certainly exist winding number 0 patterns $P$ with $P(U)$  slice such that $P(K)$ is not slice for appropriate choices of $K$, see e.g.~\Cref{exl:pj} above.
Moreover, when $w \neq 0$ this  inequality is subsumed by \Cref{conj:(1)holds}.

\subsection*{The Blanchfield pairing perspective}
Recall that for a knot $K\subset S^3$ the Alexander module $A_K$ is the first integer homology of the infinite cyclic cover of the knot complement viewed as a $\Z[t^{\pm1}]$-module via the deck group action. The Blanchfield pairing $Bl(K)$ of $K$ is a a nonsingular, hermitian, sesquilinear form
\[Bl(K)\colon A_K\times A_K\mapsto \Q(t)/\Z[t^{\pm1}],\]
that is linear in the first variable, and antilinear in the second variable with respect to the involution induced by $t\mapsto t^{-1}$. The Blanchfield form $Bl(K)$ can be expressed entirely in terms of a Seifert matrix for $K$. Moreover, two Seifert matrices are S-equivalent if and only if they determine isomorphic Blanchfield forms. Here, isomorphic means that for two knots $K,K'$ there exists a $\Z[t^{\pm1}]$-module isomorphism $\phi:A_K\to A_{K'}$ such that $Bl(K')(\phi(x),\phi(y))=Bl(K)(x,y)$ for all $x,y\in A_K$. See \cite{kearton,Ko89,FriedlPowell} for more details.\\

In~\cite[Theorem~2]{LivingstonMelvin85} Livingston and Melvin show that \[Bl(P(K))(t)\cong Bl(P(U))(t)\otimes Bl(K)(t^{w}), \]
generalizing a result of Litherland \cite{Litherland_satellite}, where this was established for $\Q[t^{\pm1}]$ coefficients.\\

This allows to provide another proof of \Cref{thm:mainIntro}. Namely, the recent characterization of $\gZ=\galg$ in terms of the Blanchfield pairing from~\cite[Theorem~1.1]{FellerLewark_19} implies that the inequality $\gZ(P(K))\leq \gZ(P(U))+\gZ(K)$ follows from $Bl(P(K))(t)\cong Bl(P(U))(t)\otimes Bl(K)(t^w)$. We do not provide details of this here as we have an elementary matrix based proof that works more generally for satellites of multiple components, 
and that does not rely on the heavy-duty inputs that are crucial for the characterization of $\gZ$ from~\cite{FellerLewark_19}, such as the Disc Embedding Theorem.

\subsection*{Acknowledgements}
We are very grateful for the exchange with Duncan McCoy, who immediately after learning of \Cref{thm:mainIntro} observed an alternative proof for it based on the behavior of the algebraic genus under null homologous twisting operations; compare~\cite{McCoy_19}.

\section{Definitions and main result for the algebraic genus}\label{sec:Def}

In this section we establish an inequality relating the $\Z$-genera of $P(U)$, $K$, and $P(K)$. We do so by establishing an inequality between their algebraic genera, defined below. Since $\gZ$ and $\galg$ are the same for knots, this will translate back to an inequality for $\gZ$ when $P$ is a one-component pattern as in the case of interest. The advantage of working with $\galg$ is that one can work with algebraic manipulations of Seifert matrices, which can be taken to have a particular form for satellites.\\

We start by recalling the relevant definitions and properties.

\begin{definition}
For a link $L\subseteq S^3$ with $r$ components, one defines its \emph{algebraic genus} as
\[\galg(L)=\min\left\{\;\;\frac{m-2n-r+1}{2}\;\;\; \middle | \;\;\;\parbox{0.6\linewidth}{There exists a Seifert surface $F$ for $L$ with $m\times m$ Seifert matrix of the form $\left[\begin{array}{cc}B & * \\ * & *\end{array}\right]$, where $B$ is a $2n\times 2n$ matrix satisfying $\det(tB-B^T)=t^n$.}\;\;\right\}\]

A Seifert surface $F$ for $L$ is said to \emph{realize} the algebraic genus $\galg(L)$ if it has a Seifert matrix as above such that $\frac{m-2n-r+1}{2}=\galg(L)$.
\end{definition}

The definition is chosen such that a knot $K$ has $\galg(K)=0$ if and only if it has trivial Alexander polynomial. Indeed, $2n\times 2n$ matrices $B$ with $\det(tB-B^T)=t^n$ for some $n \in \mathbb{Z}$ are exactly the matrices that occur as Seifert matrices of knots with trivial Alexander polynomial. We call such a $B$ an \emph{Alexander trivial matrix} or, if it is a diagonal sub-block of a larger matrix, an \emph{Alexander trivial submatrix}. A key feature of the algebraic genus is that $\gZ(L)\leq\galg(L)$ for all links $L$, so $\galg$  provides a Seifert matrix based upper bound on $\gZ$ and thus $\g$; see~\cite{FellerLewark_16}. Furthermore, $\gZ(K)=\galg(K)$ for all knots $K$~\cite[Corollary~1.5]{FellerLewark_19}, which is what we use to translate statements about $\galg$ to ones about $\gZ$.

\begin{definition}\label{def:sat} Let $P \sqcup \eta\subset S^3$ be a link of $r+1\geq 2$ components with $\eta$ an unknot such that $P$ is contained in the interior of the solid torus $V=S^3\setminus N(\eta)$. Denote by $c$ a simple closed curve representing the generator of $H_1(V;\Z)$ specified by the condition $lk(c,\eta)=+1$. Let $K\subset S^3$ be a knot and let $h\colon V \to N(K)\subset S^3$ be an orientation preserving homeomorphism  taking $c$ to $K$ and a 0-framed longitude of $c$ to a 0-framed longitude of $K$. The image of $P$ under $h$, denoted by $P(K)$, is the \emph{satellite link} with \emph{pattern} $P$ and \emph{companion} $K$. The \emph{algebraic winding number} or \emph{winding number} of $P$ is defined as $w=lk(P,\eta)$.
\end{definition}
The reader may be used to requiring $P$ to be a connected pattern, i.e.~restricting to $r=1$. In this section, we consider general patterns with $r\geq1$, which in general have that $P(K)$ is a link rather than a knot. However, in all other sections we only consider classical patterns with $r=1$.

\begin{remark} Note that without loss of generality, it is enough to consider patterns with nonnegative winding number. Indeed, if $P$ is a pattern with negative winding number then $w=lk(P,\eta)<0$, and so $lk(P^{rev},\eta)=-w>0$ and $P^{rev}$ has positive winding number. Furthermore, since $P^{rev}(K)= P(K)^{rev}$, any notion of genus agrees on $P(K)$ and $P^{rev}(K)$.
\end{remark}

\subsection*{Main result about the algebraic genus of satellites}
Our main theorem about the algebraic genus of satellites is the following.

\begin{prop}\label{prop:maininequality} For a satellite link $P(K)$ with pattern $P$ and companion $K$, the following inequality holds
\[\galg(P(K))\leq \galg(P)+\min\{|w|,1\}\galg(K).\]

In fact, for $|w|=1$ and $w=0$, we have that $P(K)$ is S-equivalent to $P(U)\sharp K$ and $P(U)$, respectively.
\end{prop}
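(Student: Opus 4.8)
The plan is to reduce everything to a statement about Seifert matrices of satellites, since $\galg$ is defined purely in terms of the $S$-equivalence class of a Seifert matrix. First I would recall the standard construction of a Seifert surface for $P(K)$: take a Seifert surface $F_P$ for $P$ inside the solid torus $V$ (which exists since $lk(P,\eta)=w$, one just needs to also cap off the intersections with a meridian disc of $V$ appropriately, at the cost of $|w|$ extra bands wrapping around $\eta$), and a Seifert surface $F_K$ for $K$; then glue $|w|$ parallel copies of $F_K$ (pushed into $N(K)$, oriented coherently) to $F_P$ along the curves where $F_P$ meets $\partial V$. The upshot is a well-known block form for the Seifert matrix of $P(K)$: roughly
\[
V_{P(K)} \;\sim\; \begin{bmatrix} V_P & * \\ * & \;\bigoplus^{|w|} V_K \oplus C\; \end{bmatrix},
\]
where $V_P$ is a Seifert matrix for $P$, each $V_K$ is a Seifert matrix for $K$, and $C$ is a block recording the linking of the parallel copies with each other — which, because the copies are $0$-framed parallel translates, is itself $S$-equivalent to something Alexander-trivial (it is essentially the Seifert matrix of a $(|w|,0)$-cable-type piece whose Alexander polynomial computation gives a power of $t$). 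This decomposition, or a suitable variant of it, is exactly the matrix-level shadow of Litherland's formula $\Delta_{P(K)}(t)=\Delta_{P(U)}(t)\Delta_K(t^{|w|})$ together with the Livingston–Melvin refinement quoted in the introduction, so I would either cite these or, more in the spirit of the ``elementary matrix based proof'' promised in the outline, derive the block form directly from the geometric picture.

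Next I would feed this block form into the definition of $\galg$. Choose Seifert surfaces $F_P$ realizing $\galg(P)$ and $F_K$ realizing $\galg(K)$, so $V_P = \left[\begin{smallmatrix} B_P & * \\ * & * \end{smallmatrix}\right]$ with $B_P$ a $2n_P\times 2n_P$ Alexander-trivial submatrix and similarly $V_K=\left[\begin{smallmatrix} B_K & * \\ * & * \end{smallmatrix}\right]$ with $B_K$ of size $2n_K\times 2n_K$. Assembling the satellite Seifert matrix and then reorganizing its rows and columns, I would exhibit an Alexander-trivial submatrix of $V_{P(K)}$ consisting of $B_P$, together with one copy of $B_K$ if $|w|\ge 1$ (I only need one, since I only need to ``absorb'' $\galg(K)$ once), together with the whole cabling block $C$ and the extra $|w|-1$ full copies of $V_K$ (the key point being that the direct sum of an Alexander-trivial matrix, a full Seifert matrix of a knot with its $*$-blocks, and any collection of full Seifert matrices need not be Alexander-trivial — so actually I should be careful and only absorb into the Alexander-trivial submatrix those pieces that genuinely have determinant a power of $t$). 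Counting: if $m_P\times m_P$ and $m_K\times m_K$ are the sizes and $r,1$ the component counts, the total surface has a Seifert matrix of size roughly $m_P + |w|\,m_K + (\text{size of }C)$, with $r$ components; a direct arithmetic check that $\frac{m-2\tilde n - r + 1}{2} \le \galg(P) + \min\{|w|,1\}\galg(K)$, using that the cabling block and the ``extra'' copies of $V_K$ for $|w|>1$ can be incorporated into the Alexander-trivial submatrix $\tilde B$ precisely when their joint Alexander polynomial is a unit — this is where $\min\{|w|,1\}$ rather than $|w|$ enters, because for $|w|\ge 2$ the appropriate choice makes all but one $V_K$-contribution get counted inside $\tilde n$. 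I would present this as a short lemma isolating the dimension count.

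For the two special cases I would argue directly. When $w=0$, a Seifert surface for $P$ can be taken disjoint from a meridian disc of $V$, hence lives in a ball in $N(K)$, hence $P(K)$ bounds the ``same'' surface as $P(U)=P$ in $S^3$ regardless of $K$; more carefully, the satellite Seifert matrix block above has an empty cabling contribution and no $V_K$ block (the gluing curves are null-homologous in $V$ so zero parallel copies of $F_K$ are attached), so $V_{P(K)}$ equals $V_{P(U)}$ up to the kind of stabilization that does not change the $S$-equivalence class; thus $P(K)$ and $P(U)$ are $S$-equivalent. When $|w|=1$, exactly one copy of $F_K$ is attached along a single curve that generates $H_1(V)$, and the cabling block $C$ is trivial (one copy, no self-linking to record beyond the $0$-framing), so the satellite Seifert matrix is block-diagonal up to $S$-equivalence with blocks $V_P$ and $V_K$ — but that is precisely a Seifert matrix of the connected sum $P(U)\#K$, giving $S$-equivalence of $P(K)$ and $P(U)\#K$, and hence $\galg(P(K))=\galg(P(U)\#K)\le\galg(P(U))+\galg(K)$ by sub-additivity of $\galg$ under connected sum (which follows from the block-diagonal form and is the $|w|=1$ instance of the inequality).

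The main obstacle I anticipate is getting the cabling block $C$ exactly right and verifying that, together with the surplus $|w|-1$ copies of $V_K$ when $|w|\ge2$, it can genuinely be folded into the Alexander-trivial submatrix $\tilde B$ so that the dimension count yields $\min\{|w|,1\}\galg(K)$ and not $|w|\galg(K)$. Concretely, I must show that the Alexander polynomial of the relevant sub-block is a unit in $\Z[t^{\pm 1}]$ — equivalently identify that sub-block, up to $S$-equivalence, with the Seifert matrix of a knot with trivial Alexander polynomial — which is essentially the matrix-level content of the fact that $\Delta$ of the ``winding-number part'' of the satellite is $\Delta_K(t^{|w|})/\Delta_K(t)$ times a unit is \emph{not} generally trivial, so the honest statement is subtler: only one genuine $\Delta_K$ factor survives in the companion of the Alexander-trivial submatrix, and I need to pick the submatrix $\tilde B$ to contain $B_P$ plus all of the winding-number block \emph{minus} one chosen $B_K$. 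Making this bookkeeping precise — and checking it degenerates correctly to the $|w|\le 1$ cases — is the technical heart; everything else is the routine construction of satellite Seifert surfaces and arithmetic with the formula $\galg = \frac{m-2n-r+1}{2}$.
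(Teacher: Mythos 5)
Your overall strategy (satellite Seifert surface, block Seifert matrix, hunt for an Alexander-trivial submatrix, dimension count) is indeed the paper's strategy, and your treatment of the $w=0$ and $|w|=1$ cases is essentially correct. But there is a genuine gap at what you yourself flag as ``the technical heart,'' and your two attempts to resolve it both fail. First, the block form you write down is not quite right: the $|w|$ parallel copies of $F_K$ link one another, so the companion part of the Seifert matrix is not $\bigoplus^{|w|} V_K \oplus C$ for some separable cabling block $C$; it is the entangled $|w|\times|w|$ block matrix $|w|V_2$ with $V_2$ on and above the diagonal and $V_2^T$ below, exactly as in Lickorish. Second, your proposed Alexander-trivial submatrix is wrong on both tries. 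Version one (``$B_P$, one copy of $B_K$, the cabling block, and the $|w|-1$ surplus copies of $V_K$'') cannot be Alexander trivial in general, since the $|w|-1$ full copies of $V_K$ contribute $\Delta_K(t)^{|w|-1}$ to the determinant, which is not a unit. Version two (``$B_P$ plus all of the winding-number block minus one chosen $B_K$'') is also off: removing an Alexander-trivial sub-block $A_2$ from $|w|V_2$ does not make what remains Alexander trivial, and the resulting dimension count gives $\galg(P(U)) + m_2 - g_2$ rather than $\galg(P(U)) + \galg(K)$, which is weaker and wrong.

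The idea you are missing is that the Alexander-trivial submatrix inside $|w|V_2$ should be (congruent to, and determinant-equal to) the \emph{$|w|$-cable of the Alexander-trivial sub-block $A_2$}, namely the matrix $|w|A_2$, not $|w|-1$ copies of the full $V_2$. The paper normalizes bases so that $V_2 = \left[\begin{smallmatrix} A_2 & B & C \\ B^T & D_{11} & D_{12} \\ C^T & D_{21} & D_{22} \end{smallmatrix}\right]$ with $D-D^T$ the standard symplectic form, applies a congruence and a permutation to $|w|V_2$ to isolate the $|w|$-cable $Y$ of $A_2$, and then deletes exactly two $g_2\times g_2$ block rows/columns to obtain a submatrix $X_\Delta$ of the required size $2(|w|m_2 - g_2)$. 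A row/column elimination in $X_\Delta - t X_\Delta^T$ using the $\pm I_{g_2}$ padding shows $\det(X_\Delta - tX_\Delta^T) = \det(Y - tY^T) = \det(|w|A_2 - t(|w|A_2)^T)$, and this last determinant equals $\Delta_{C_{|w|,1}(J)}(t) = \Delta_J(t^{|w|}) = 1$ by Litherland's formula, where $J$ is a knot with Seifert matrix $A_2$ and hence trivial Alexander polynomial. So the unit-determinant verification you correctly identify as essential is exactly the cable-of-an-Alexander-trivial-matrix computation, not a cancellation between surplus $V_K$ factors. A smaller point: the paper also proves a preparatory lemma showing one may choose a $\galg$-realizing Seifert surface for $P(U)$ meeting $\eta$ only positively (via stabilizations that preserve $\galg$-realization), which is needed to set up the cable construction cleanly; your sketch elides this.
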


Before we provide the proof of \Cref{prop:maininequality}, we derive \Cref{thm:mainIntro} from it.
\begin{proof}[Proof of \Cref{thm:mainIntro}]
Let $P$ be a one component pattern and $K$ be a knot.
Then $\gZ=\galg$ for $P(K)$, $P(U)$ and $K$, since they are all knots \cite[Corollary~1.5]{FellerLewark_19}. Using these equalities, \Cref{thm:mainIntro} follows immediately from \Cref{prop:maininequality}.
\end{proof}

Our proof of \Cref{prop:maininequality} uses a construction of a Seifert surface for $P(K)$ similar to the one in~\cite[Chapter~6, Theorem~6.15]{Lickorish_97}, and illustrated below, with some additional attention paid to realizing $\galg$.
\begin{figure}[h!]
\includegraphics[height=2.5cm]
{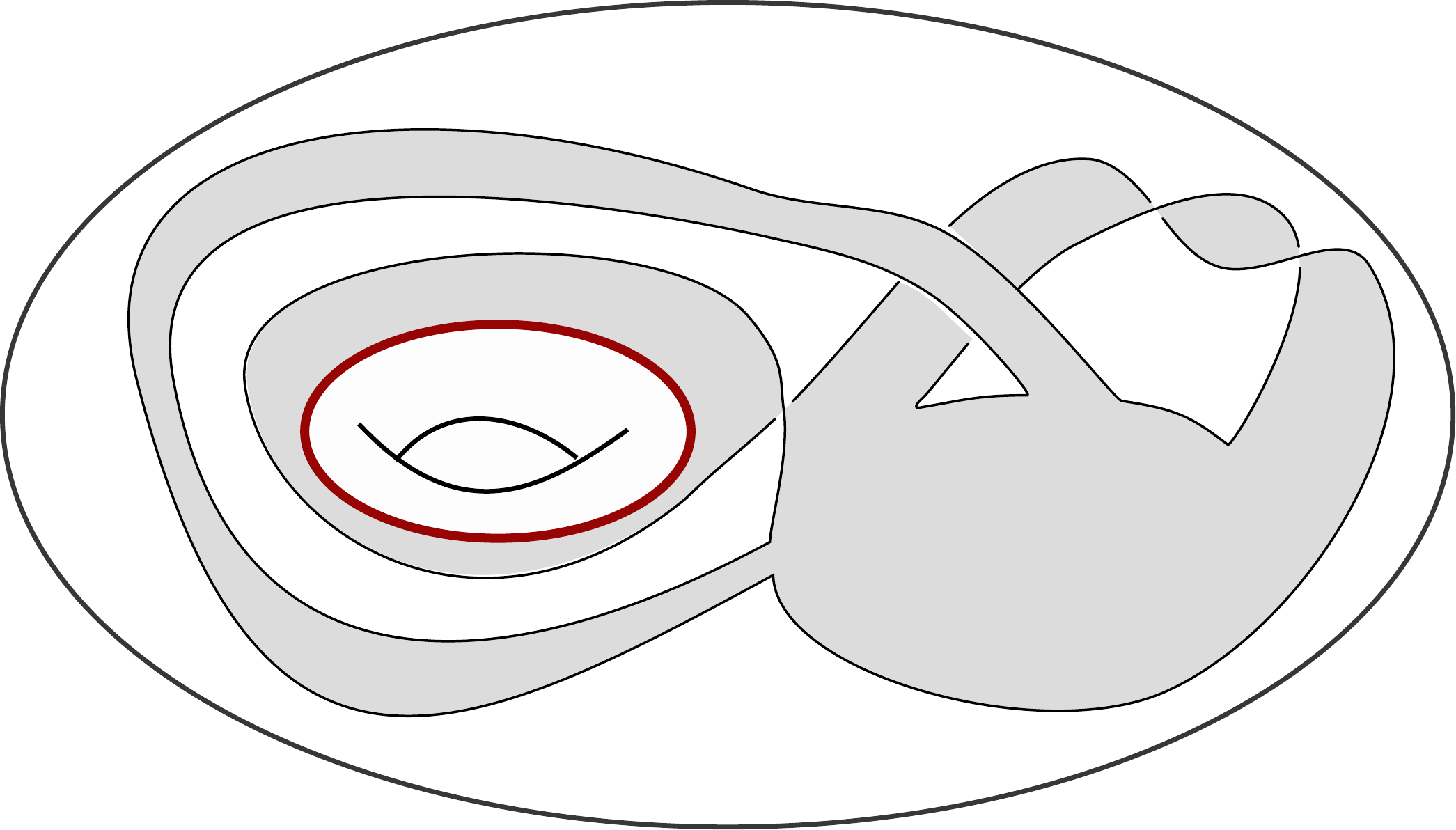}
\qquad
\includegraphics[height=2.5cm]
{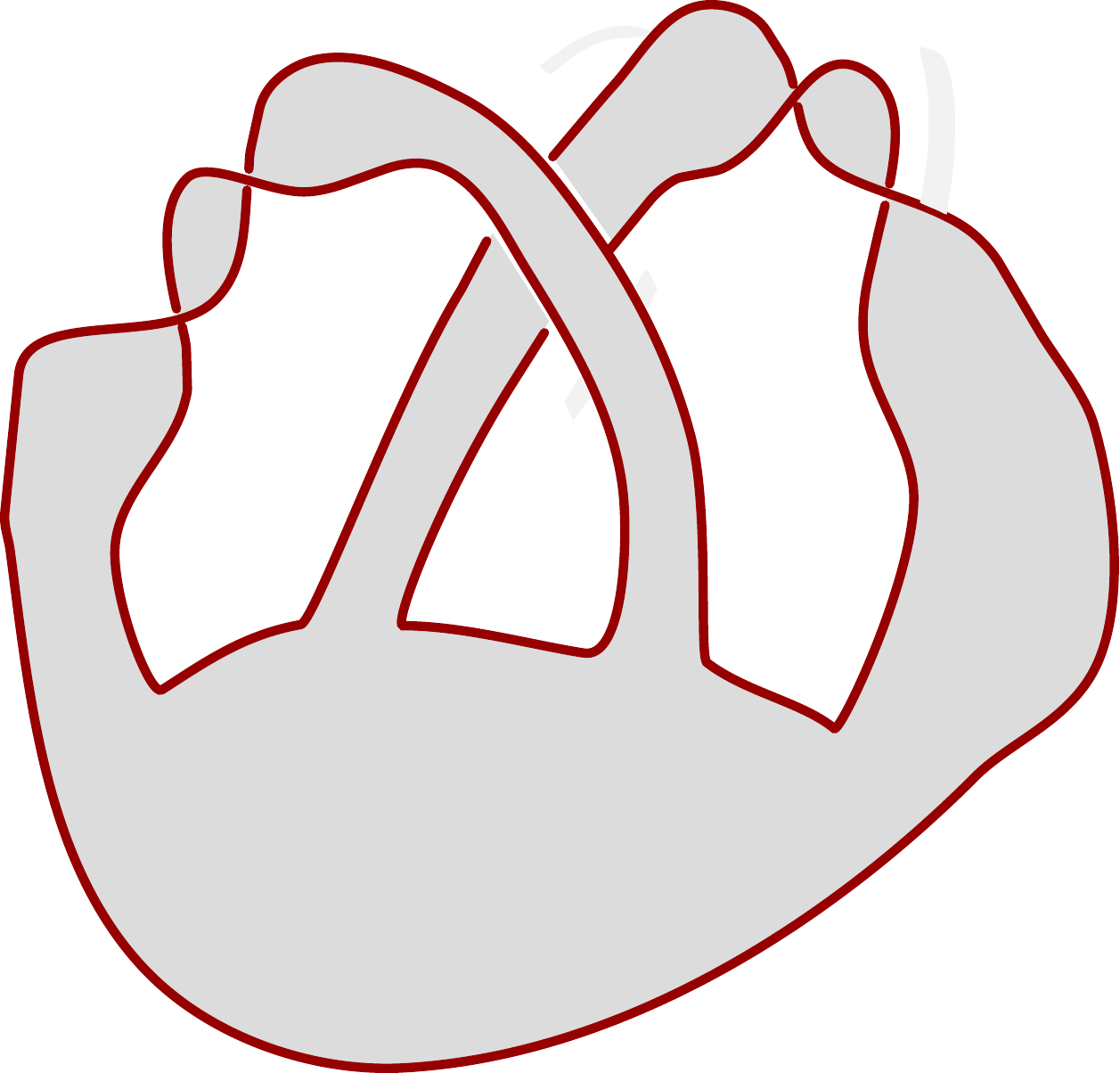}
\qquad
\includegraphics[height=3.25cm]
{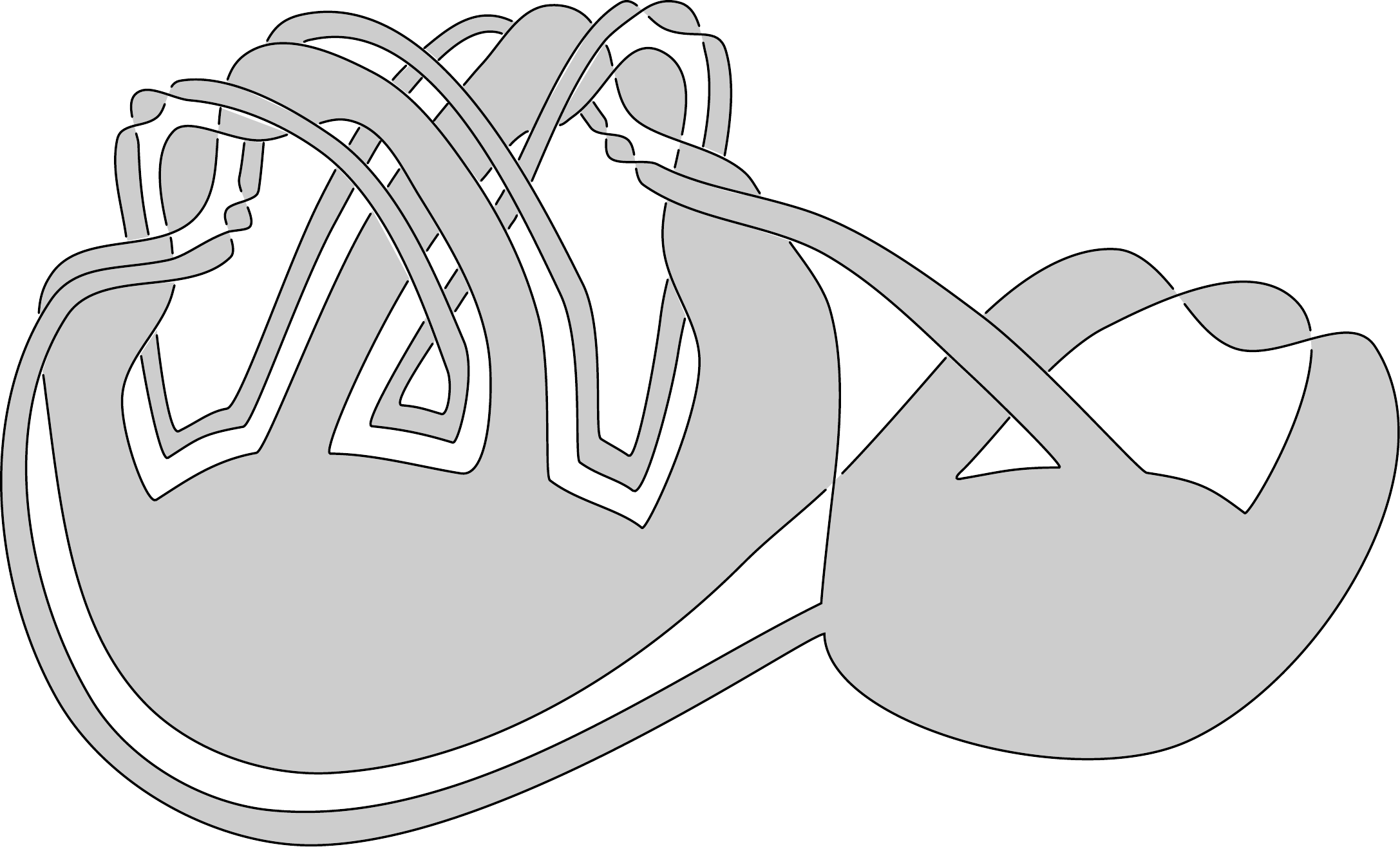}
\caption{A Seifert surface for a pattern $P$ (left) and a Seifert surface for a knot $K$ (center) combine to give a Seifert surface for $P(K)$ (right).}\label{fig:seifertexample}
\end{figure}

\begin{lemma}\label{lemma:NiceSSforP} Let $P\, \sqcup\, \eta$ be a pattern with winding number $w\geq 0$, and let $l$ denote a chosen 0-framed longitude in the boundary of $V=S^3\setminus N(\eta)$.
 There exists a Seifert surface $G\subset S^3\setminus N(\eta)$ for the link $P\sqcup wl$ such that $G\cup_{wl} wD^2$ is a Seifert surface for $P(U)$ that realizes $g_{alg}(P(U))$. Here $wl$ and $wD^2$ denote respectively $w$ parallel copies of $l$ and $D^2$.
\end{lemma}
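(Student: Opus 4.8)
The plan is to start with a Seifert surface $F$ for $P(U)$ in $S^3$ that realizes $\galg(P(U))$, and then ``un-satellite'' it back into the solid torus $V = S^3\setminus N(\eta)$ to obtain the desired $G$. Concretely, since $P(U)$ is obtained from $P\sqcup\eta$ by filling in the solid torus $N(\eta)$ with $\eta$ bounding a disc, there is a standard way (as in Lickorish~\cite[Chapter 6]{Lickorish_97}) to build a Seifert surface for $P(U)$: take a Seifert surface $G_0$ for the link $P\sqcup wl$ inside $V$ — where $wl$ is $w$ parallel $0$-framed longitudes of $\eta$ on $\partial V$, oriented so that $[P\sqcup wl]=0$ in $H_1(V)$ since $w(P)=w$ — and cap off the $w$ boundary circles $wl$ with $w$ parallel meridian discs of $\eta$. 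So the real content is: any Seifert surface for $P(U)$ can, up to isotopy, be assumed to intersect the meridian disc of $\eta$ in exactly $w$ parallel arcs that are themselves parallel copies of a longitude, after which cutting along that disc recovers a surface $G\subset V$ of the required type; and this can be arranged without increasing genus, so that if $F$ realized $\galg(P(U))$ then so does $G\cup_{wl} wD^2$.

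First I would fix a Seifert surface $F\subset S^3$ for $P(U)$ realizing $\galg(P(U))$, together with its Alexander-trivial block decomposition of the Seifert form. Next I would put $F$ in general position with respect to the meridian disc $D$ of $\eta$, so $F\cap D$ is a disjoint union of arcs and circles. Circles of intersection and inessential arcs can be removed by the usual innermost-disc / outermost-arc surgeries on $F$ (using that $D$ is a disc and $F$ is incompressible-free in the sense that we may surger along compressions since that only lowers genus, and arc-surgeries do not raise genus); this is the standard argument, and one must check it does not raise genus — surgering along a compressing curve lowers genus by one and surgering along an arc is a genus-non-increasing move. After cleanup, $F\cap D$ consists of $|w|$ essential arcs (the algebraic count is $w=\mathrm{lk}(P,\eta)$, and I would arrange them all coherently oriented so the geometric count equals $|w|$ — this may require an additional argument or an observation that one can always reduce to the coherent case, since we only need \emph{a} surface of this type, not \emph{the} minimal one, provided genus is controlled). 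Cutting $S^3$ along $D$ gives back $V$ with $N(\eta)$ removed, and $F$ cut along these arcs becomes a surface $G\subset V$ with $\partial G = P\sqcup(\text{arcs closing up along }\partial V)$; choosing $D$'s boundary framing to be the $0$-framing makes these closing arcs into $w$ parallel $0$-framed longitudes $wl$, as desired. Finally, reversing the construction, $G\cup_{wl}wD^2$ is isotopic to the original $F$, hence still realizes $\galg(P(U))$.

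The main obstacle I anticipate is the genus bookkeeping in the cleanup step combined with the orientation/coherence issue: a priori $F\cap D$ could have $|w|$ plus some canceling pairs of oppositely-oriented essential arcs, and removing a canceling pair by an ambient isotopy of $F$ (sliding one arc across $\partial D$) must be justified carefully and shown to preserve both the genus and the Alexander-trivial sub-block structure of the Seifert form. A clean way around this is to not insist on an absolutely minimal configuration: it suffices to produce \emph{some} $G$ with $G\cup_{wl}wD^2$ realizing $\galg(P(U))$, so one can allow the arc count to be $|w|+2k$ for some $k\ge 0$ as long as one can still cap off consistently — but since the statement specifically asks for the boundary to be $P\sqcup wl$ with exactly $w$ copies of $l$, the coherent-arc reduction does seem to be needed, and that is where I would concentrate the careful argument. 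I expect one can handle it by an innermost-arc argument on $D$ applied to an oppositely-oriented adjacent pair, using that $F$ is orientable so that between two such arcs there is a band of $F$ one can isotope across, at the cost of at most a genus-preserving modification.
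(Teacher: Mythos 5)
Your high-level plan (start from a surface $F$ realizing $\galg(P(U))$ and arrange it to meet the solid torus structure coherently) matches the paper's, but the mechanism you propose for reducing the intersections does not work, and the idea you are missing is exactly the one the paper hinges on.

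You anticipate that the hard step is getting from $|w|$-plus-canceling-pairs of essential arcs down to $|w|$ coherently oriented ones, and you hope to resolve this by an isotopy that ``slides a band of $F$ across'' between two oppositely-oriented adjacent arcs, or by innermost-disc/outermost-arc compressions. Neither move works in general. A compression produces a \emph{different} Seifert surface, and there is no reason the result still realizes $\galg(P(U))$; your worry that compression ``might raise genus'' is misplaced --- the issue is that lowering the genus of $F$, or otherwise surgering it, is not a priori compatible with retaining an Alexander-trivial subblock of the right size. And removing an oppositely-oriented pair of $\eta$-intersections by ambient isotopy is simply not always possible: the portion of $F$ between the two intersection points may be knotted or linked, so there is no ``band to slide across.'' This is the genuine gap.

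The paper's resolution goes in the opposite direction from what you expect: rather than lowering or preserving the genus of $F$, it \emph{raises} it. Dually to your arcs-in-a-disc picture, the paper looks at the points $\eta\cap F$ (equivalently, the disks $F\cap N(\eta)$), and kills an adjacent $+/-$ pair by a \emph{stabilization}: tube $F$ to itself along an arc of $\eta$ joining the two intersection points. This removes two intersections and adds a handle to $F$. The reason this is legitimate --- and the key lemma your proposal lacks --- is that stabilization of a Seifert surface preserves the property of realizing $\galg$ (\cite[Lemma~14]{FellerLewark_16}). With that lemma in hand there is nothing left to argue: iterate the stabilization until all intersections are coherent, and take $G = F'\cap V$. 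So the fix is not a cleverer isotopy argument but a change of which moves are allowed: trade genus-minimality of $F$ for $\galg$-realization, and use the genus-increasing move that the latter tolerates.

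Two smaller points. Your proposal's innermost-circle cleanup of $F\cap D$ is unnecessary once you work with $\eta\cap F$ instead of $F\cap D$, since general position makes $\eta\cap F$ a finite set of points directly. And the final sentence of your proposal, that $G\cup_{wl}wD^2$ is isotopic to the original $F$, is false in the paper's proof as well --- it is isotopic to the stabilized surface $F'$, which has strictly larger genus than $F$ when $n>0$; it is the $\galg$-realization, not the surface, that is preserved.
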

\begin{proof} The link $P(U)$ is obtained by regarding the pattern $P$ as a link in $S^3$, forgetting about the effect of the unknotted component $\eta$. Let $F$ be a Seifert surface for $P(U)$ whose Seifert form realizes $g_{alg}(P(U))$. Using general position, we can and do assume that $\eta$ intersects $F$ transversely so that the intersection of a small enough tubular neighborhood $N(\eta)$ of $\eta$ and the surface $F$ consists of a collection of $k$ disjoint disks. Denote by $p$ and $n$ the number of disks that intersect $\eta$ positively and negatively, respectively, and note that $w=p-n$.
To prove the lemma it is enough to modify $F$ such that $k=w$, or equivalently that $n=0$, without losing the property that its Seifert form realizes $g_{alg}(P(U))$. This can be achieved by stabilizations, which we prove in detail in the following paragraph.

Assume that $n>0$. Choose a disk $D_i^-\subset F$ intersecting $\eta$ negatively and a disk $D_i^+\subset F$ intersecting $\eta$ positively that are adjacent on $\eta$ (i.e.~they are connected by an arc on $\eta$ that is disjoint from all the other disks). 
\begin{figure}[h!]
\def\svgwidth{0.125\textwidth}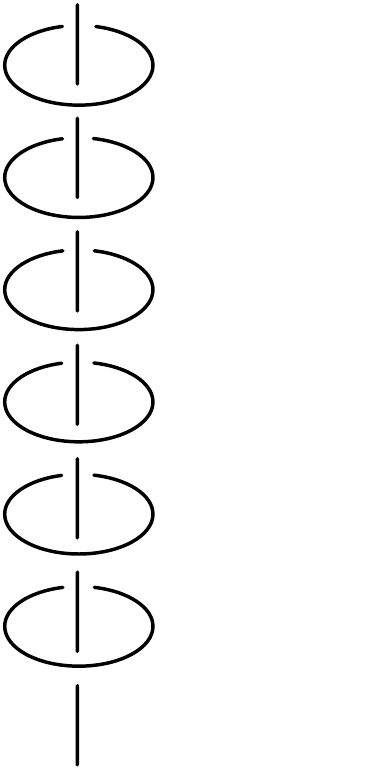\hspace{2cm}
\def\svgwidth{0.125\textwidth}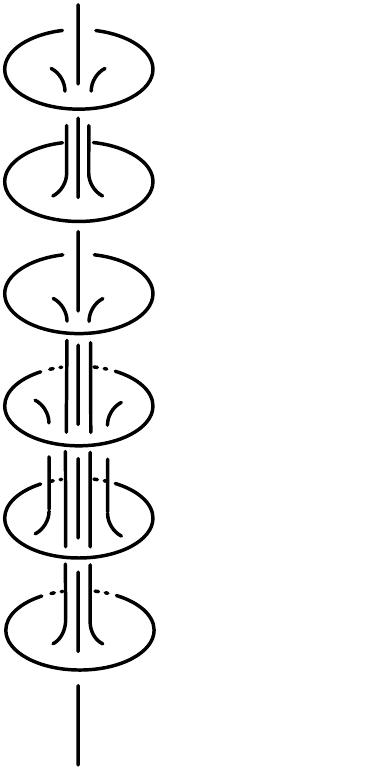
\caption{The unknotted component $\eta$ and some disks in $F\cap N(\eta)$ (left) and the annulus obtained after stabilizing (right).}\label{local-stab}
\end{figure}
Let $a$ be an arc in $\eta$ joining $D_i^+$ to $D_i^-$ such that $a$ is disjoint from all the other disks. Stabilize $F$ using a tube surrounding the arc $a$ to find a new Seifert surface that has two fewer intersections with $\eta$. Iterate this procedure of choosing two disks and stabilizing until a total of $n$ stabilizations have happend.
Call the result of these stabilizations $F'$ and notice that $F'$ intersects $\eta$ only with positive sign, and so if $k'$ denotes the number of disks in the intersection $F'\cap N(\eta)$, then $k'=w$ as sought. For a local picture of this procedure see \Cref{local-stab}. Finally, \cite[Lemma 14]{FellerLewark_16} shows that stabilization of a Seifert surface preserves the property of realizing $g_{alg}$ and so $F'$ also realizes $g_{alg}(P(U))$. We then let $G=F'\cap V$.
\end{proof}

With the previous lemma in place, we are now ready to prove \Cref{prop:maininequality}.

\begin{proof}[Proof of \Cref{prop:maininequality}] Fix a knot $K$ and a pattern $P$ with $r\geq1$ components and algebraic winding number $w$. Without loss of generality assume $w\geq0$, and let $G$ be a Seifert surface for $P \sqcup w l$ as in \Cref{lemma:NiceSSforP}, and let $V_1$ be a Seifert matrix for $P(U)$ corresponding to a choice of a basis for the first homology of $G \cup w D^2$. Similarly, let $S$ be a Seifert surface for $K$ that realizes $g_{alg}(K)$ and let $V_2$ be a Seifert matrix corresponding to a choice of a basis for the first homology of $S$. We can and do assume that we have picked our bases for the first homology of $G \cup w D^2$ and $S$ such that

$$V_1=\left[\begin{array}{cc}A_1& * \\ * & *\end{array}\right], \qquad\text{ and }\qquad V_2=\left[\begin{array}{ccc}A_2 & \begin{array}{cc}B & C\end{array} \\ \
  \begin{array}{c} B^T \\C^T \end{array} & D \end{array}\right], $$
where the matrices $V_1$ and $V_2$ are of size $(2m_1+r-1)\times (2m_1+r-1)$ and $2m_2\times 2m_2$, respectively, for some non-negative integers $m_1,m_2$, and for $i=1,2$ the matrix $A_i$ is an Alexander trivial matrix of size $2(m_i-g_i)\times 2(m_i-g_i)$ for $g_1=\galg(P(U))$ and $g_2= g_{alg}(K)$. We note that $B$ and $C$ are $2(m_2-g_2) \times g_2$ matrices, and  we may further choose our basis for $H_1(S)$ such that $D= \left[ \begin{array}{cc} D_{11}  & D_{12} \\ D_{21} & D_{22} \end{array} \right]$ is a $2g_2 \times 2g_2$ matrix such that $D-D^T= \left[ \begin{array}{cc}
0 & I_{g_2} \\
-I_{g_2} & 0
\end{array}\right]$.

Let $G(K)$ denote $h(G)$ for $h:V\to N(K)$ as in \Cref{def:sat}, in other words the result of cabling $G$ into $K$. Let $\widetilde{F}$ to be the Seifert surface of $P(K)$ given as
\[\widetilde{F}=G(K)\cup w{F},\] where as usual $wF$ denotes $|w|$ many parallel copies of $F$ with boundaries equal to the boundaries of $G(K)$ and $\widetilde{F}=G(K)\cup w{F}$ gets the orientation induced by $G(K)$. Then, pushing forward the basis of $H_1(G\cup w D,\Z)$ via $h_*$ and taking parallel copies of the basis of $H_1(F,\Z)$ chosen earlier we obtain a basis for $H_1(\widetilde{F};\Z)$ and the following Seifert matrix for $P(K)$:

\[V=\left[\begin{array}{c|c}V_1 & 0 \\[0.5ex]\hline
0 & |w|V_2
\end{array}\right], \text{ where } |w|V_2:= \left[ \begin{array}{cccc}
V_2&V_2&\cdots& V_2\\
V^T_2&V_2&\cdots& V_2\\
\vdots&\vdots&\ddots&\vdots\\
V^T_2&V^T_2&\cdots& V_2\\
\end{array}\right] \]
Compare also with the construction in~\cite[Chapter 6]{Lickorish_97}, where this calculation is given for a particular, similarly constructed Seifert surface for $P(K)$. Note that if $|w|$ is $1$ or $0$, then $V$ is a Seifert matrix for $P(U)\sharp K$ and $P(U)$, respectively. This establishes the `in fact'-part of \Cref{prop:maininequality}.

Next, observe that a $2m \times 2m$ Alexander trivial submatrix $M_0$ of a matrix $M$ and a
$2n \times 2n$ Alexander trivial submatrix $N_0$ of a matrix $N$ automatically combine to give a $2(m+n) \times 2(m+n)$ Alexander trivial submatrix  $M_0\oplus N_0$ of $M \oplus N$. Since $V=V_1\oplus |w|V_2$, it therefore suffices to show that there exists a submatrix $X_\Delta$  of $|w|V_2$ that is Alexander trivial and of size $2(|w|m_2-g_2)  \times 2(|w|m_2-g_2)$. To simplify the matrix manipulation, notice that a simple matrix congruence transforms $|w|V_2$ into the matrix {\fontsize{8pt}{8pt}\selectfont
\begin{align*}
X=&\left[\begin{array}{cccc} V_2& 0  & \ldots & 0  \\
-(V_2-V^T_2) &  V_2-V^T_2 & \ldots & 0  \\
0& -(V_2-V^T_2) &\ldots & 0  \\
\vdots & \vdots & \ddots &\vdots \\
0 & 0 & \ldots &  V_2-V^T_2
\end{array}\right]\\=&\left[\begin{array}{ccc ccc c ccc ccc}
 \phantom{1}&&&&&&&&&&&&\\\cline{1-3}
\multicolumn{1}{|c}{A_2} & B & C&\multicolumn{3}{|c}{}&&\multicolumn{6}{c}{}  \\
\multicolumn{1}{|c}{ B^T} & D_{11} & D_{12} &\multicolumn{3}{|c}{}&\cdots&\multicolumn{6}{c}{}\\
\multicolumn{1}{|c}{  C^T} & D_{21} & D_{22} &\multicolumn{3}{|c}{}&&\multicolumn{6}{c}{} \\\cline {1-6}
 \multicolumn{1}{|c}{A_2^T-A_2}& 0 & 0 &\multicolumn{1}{|c}{A_2-A_2^T}& 0 & 0& \multicolumn{1}{|c}{}&\multicolumn{6}{c}{} \\
\multicolumn{1}{|c}{ 0} & 0 &- I_g&\multicolumn{1}{|c}{0} & 0 & I_g&\multicolumn{1}{|c}{\cdots}&\multicolumn{6}{c}{} \\
\multicolumn{1}{|c}{ 0} & I_g &0&\multicolumn{1}{|c}{0} & -I_g &0&\multicolumn{1}{|c}{}&\multicolumn{6}{c}{} \\\cline {1-6}
 \multicolumn{6}{c}{}&\ddots& \multicolumn{6}{c}{}\\ \cline {8-10}
 \multicolumn{6}{c}{}&\multicolumn{1}{c|}{}&A_2-A_2^T& 0 & 0 &\multicolumn{3}{|c}{} \\
\multicolumn{6}{c}{}&\multicolumn{1}{c|}{\cdots}&0 & 0 &I_g&\multicolumn{3}{|c}{} \\
\multicolumn{6}{c}{}&\multicolumn{1}{c|}{}&0 & -I_g &0&\multicolumn{3}{|c}{} \\\cline {8-13}
 \multicolumn{6}{c}{}&\multicolumn{1}{c|}{}&A_2^T-A_2& 0 & 0&\multicolumn{1}{|c}{A_2-A_2^T}& 0 & \multicolumn{1}{c|}{0}  \\
\multicolumn{6}{c}{}&\multicolumn{1}{c|}{\cdots}&0 & 0 &- I_g &\multicolumn{1}{|c}{0} & 0 & \multicolumn{1}{c|}{I_g} \\
\multicolumn{6}{c}{}&\multicolumn{1}{c|}{}&0 & I_g &0 &\multicolumn{1}{|c}{0} & -I_g &\multicolumn{1}{c|}{0} \\\cline {8-13}
 \phantom{1}&&&&&&&&&&&&\\
\end{array}\right].
\end{align*}}%
That is, $|w|V_2$ is congruent to a $|w| \times |w|$ block matrix $X$ with $(i,j)$-block entry given by $V_2$ if $i=j=1$, by $V_2-V_2^T$ if $i=j>1$, by $V_2^T-V_2$ if $i=j+1$, and  $0$ otherwise.
Then, replacing $X$ by $QXQ^t$, where $Q$ is a permutation matrix, we obtain
{\fontsize{8pt}{8pt}\selectfont
\renewcommand\arraystretch{1.125}
\begin{align*}
X'&=
 \left[
 \begin{array}{cccc cc cc c cc cc}
 \phantom{1}&&&&&&&&&&&&\\\cline{1-6}
\multicolumn{4}{|c}{\multirow{4}{*}{$Y$}}&\multicolumn{1}{|c}{B}&\multicolumn{1}{c|}{C}&&&&&&&\\\cline{5-6}
\multicolumn{4}{|c}{}&\multicolumn{1}{|c}{0}&\multicolumn{1}{c|}{0}&&&&&&&\\
\multicolumn{4}{|c}{}&\multicolumn{1}{|c}{\vdots}&\multicolumn{1}{c|}{\vdots}&&&\cdots&&&&\\
\multicolumn{4}{|c}{}&\multicolumn{1}{|c}{0}&\multicolumn{1}{c|}{0}&&&&&&&\\\cline{1-6}
\multicolumn{1}{|c|}{B^T}&0&\cdots&0&\multicolumn{1}{|c}{D_{11}}&\multicolumn{1}{c|}{D_{12}}&& &&&&&\\
\multicolumn{1}{|c|}{C^T}&0&\cdots&0&\multicolumn{1}{|c}{D_{21}}&\multicolumn{1}{c|}{D_{22}}&& &\cdots&&&&\\\cline{1-8}
&&&&\multicolumn{1}{|c}{0}   &\multicolumn{1}{c|}{-I_g}& 0   &\multicolumn{1}{c|}{I_g}&&&&&\\
&&&&\multicolumn{1}{|c}{I_g}&\multicolumn{1}{c|}{0}    &-I_g&\multicolumn{1}{c|}{0}   &\cdots&&&&\\\cline{5-8}
&&& & & &&&\ddots&&&&\\\cline{10-11}
\multicolumn{8}{c}{}&&\multicolumn{1}{|c}{0} & \multicolumn{1}{c|}{I_g} &  & \\
\multicolumn{8}{c}{}&\cdots&\multicolumn{1}{|c}{-I_g}& \multicolumn{1}{c|}{0} &  & \\\cline{10-13}
\multicolumn{8}{c}{}&&\multicolumn{1}{|c}{0} & \multicolumn{1}{c|}{-I_g} & 0 & \multicolumn{1}{c|}{I_g}\\
\multicolumn{8}{c}{}&\cdots&\multicolumn{1}{|c}{I_g} & \multicolumn{1}{c|}{0}& -I_g &\multicolumn{1}{c|}{0} \\\cline{10-13}
 \phantom{1}&&&&&&&&&&&&\\
\end{array}
\right],
\end{align*}}%
where {\fontsize{8pt}{8pt}\selectfont$Y=\left[\begin{array}{cccc} A_2& 0  & \ldots & 0  \\
-(A_2-A^T_2) &  A_2-A^T_2 & \ldots & 0  \\
0& -(A_2-A^T_2) &\ldots & 0  \\
\vdots & \vdots & \ddots &\vdots \\
0 & 0 & \ldots &  A_2-A^T_2
\end{array}\right]$}, i.e.~$Y$ is a $|w| \times |w|$ block matrix with $(i,j)$ block entry equal to $A_2$ if $i=j=1$, $A_2-A_2^T$ if $i=j>1$, $A_2^T-A_2$ if $i=j+1$ and $0$ else.

We will show that $X_\Delta$, the matrix obtained from $X'$ by deleting the first blockrow and column after $Y$ and the last blockrow and column, is Alexander trivial. Indeed, note that the matrix $X_\Delta - t(X_\Delta)^T$ is given by
{\fontsize{7pt}{7pt}\selectfont
\begin{align*}
\left[
 \begin{array}{cccc c cc c cc cc c}
 \phantom{1}&&&&&&&&&&&\\\cline{1-5}
\multicolumn{4}{|c}{\multirow{4}{*}{$Y- tY^T$}}&\multicolumn{1}{|c|}{(1-t)C}&&&&&&&\\\cline{5-5}
\multicolumn{4}{|c}{}&\multicolumn{1}{|c|}{0}&&&&&&&\\
\multicolumn{4}{|c}{}&\multicolumn{1}{|c|}{\vdots}&&&\cdots&&&&\\
\multicolumn{4}{|c}{}&\multicolumn{1}{|c|}{0}&&&&&\\\cline{1-5}
\multicolumn{1}{|c|}{(1-t)C^T}&0&\cdots&0&\multicolumn{1}{|c|}{D_{22}-tD_{22}^T}& tI_g &&\cdots&&&&&\\\cline{1-7}
&&&&\multicolumn{1}{c|}{-I_g}&	0&\multicolumn{1}{c|}{(1+t)I_g}&&&&&&\\
&&&&\multicolumn{1}{c|}{}	 &-(1+t)I_g&\multicolumn{1}{c|}{0}&\cdots&&&&&\\\cline{6-7}
&&&&&&&\ddots&&&&&\\\cline{9-12}
&& &&&&&&\multicolumn{1}{|c}{0}&\multicolumn{1}{c|}{(1+t)I_g} &0&\multicolumn{1}{c|}{-tI_g}&0\\
&& &&&&&\cdots&\multicolumn{1}{|c}{-(1+t)I_g}&\multicolumn{1}{c|}{0}&tI_g&\multicolumn{1}{c|}{0}&0\\\cline{9-12}
&&&&&&&&\multicolumn{1}{|c}{0}&-I_g&\multicolumn{1}{|c}{0}&\multicolumn{1}{c|}{(1+t)I_g} &0\\
&&&&&&&\cdots&\multicolumn{1}{|c}{I_g}&0&\multicolumn{1}{|c}{-(1+t)I_g}&\multicolumn{1}{c|}{0}&\multicolumn{1}{C}{ tI_g }\\\cline{9-12}
&&&&&&&&0&0&0&\multicolumn{1}{C}{ -I_g} &0 \\
\end{array}
\right]
\end{align*}}%
and so the only nonzero entry in its final block row is $-I_g$ in the penultimate block column, and similarly the only nonzero entry in its final block column is $tI_g$ in the penultimate block row. We can therefore delete the final two rows and columns of $X_\Delta - t(X_\Delta)^T$ without changing its determinant. Thus, $\det\left(X_\Delta - t(X_\Delta)^T\right)$ is given by {\fontsize{8pt}{8pt}\selectfont
\begin{align*}
&\det \left[
 \begin{array}{cccc c cc c cc c}
 \phantom{1}&&&&&&&&&\\\cline{1-5}
\multicolumn{4}{|c}{\multirow{4}{*}{$Y- tY^T$}}&\multicolumn{1}{|c|}{(1-t)C}&&&&&\\\cline{5-5}
\multicolumn{4}{|c}{}&\multicolumn{1}{|c|}{0}&&&&&\\
\multicolumn{4}{|c}{}&\multicolumn{1}{|c|}{\vdots}&&&\cdots&&\\
\multicolumn{4}{|c}{}&\multicolumn{1}{|c|}{0}&&&\\\cline{1-5}
\multicolumn{1}{|c|}{(1-t)C^T}&0&\cdots&0&\multicolumn{1}{|c|}{D_{22}-tD_{22}^T}& tI_g &&\cdots&&&\\\cline{1-7}
&&&&\multicolumn{1}{c|}{-I_g}&	0&\multicolumn{1}{c|}{(1+t)I_g}&&&&\\
&&&&\multicolumn{1}{c|}{}	 &-(1+t)I_g&\multicolumn{1}{c|}{0}&\cdots&&&\\\cline{6-7}
&&&&&&&\ddots&&&\\\cline{9-10}
&&&&&&&&\multicolumn{1}{|c}{0}&\multicolumn{1}{c|}{(1+t)I_g} &0\\
&&&&&&&\cdots&\multicolumn{1}{|c}{-(1+t)I_g}&\multicolumn{1}{c|}{0}&\multicolumn{1}{C}{ tI_g }\\\cline{9-10}
&&&&&&&\cdots&0&\multicolumn{1}{C}{ -I_g}&0 \\
\end{array}
\right]
\intertext{\normalsize and repeating this procedure one observes that}
&\det\left(X_\Delta - t(X_\Delta)^T\right)=
\det \left[
 \begin{array}{cccc c c}
  \phantom{1}&&&&&\\\cline{1-5}
 \multicolumn{4}{|c}{\multirow{4}{*}{$Y- tY^T$}}&\multicolumn{1}{|c|}{(1-t)C}&0\\\cline{5-5}
\multicolumn{4}{|c}{}&\multicolumn{1}{|c|}{0}&\\
\multicolumn{4}{|c}{}&\multicolumn{1}{|c|}{\vdots}&\vdots\\
\multicolumn{4}{|c}{}&\multicolumn{1}{|c|}{0}&\\\cline{1-5}
\multicolumn{1}{|c|}{(1-t)C^T}&0&\cdots&0&\multicolumn{1}{|c|}{D_{22}-tD_{22}^T}& tI_g\\\cline{1-5}
0&&\cdots&&\multicolumn{1}{c}{-I_g}&	0\\
\end{array}
\right]=\det \left(Y- tY^T\right)
\end{align*}}%

By reversing the row and column moves we performed on $|w|V_2$  at the beginning of this argument we see that $Y$ is congruent to $|w|A_2$, and hence
\[\det\left(X_\Delta - t(X_\Delta)^T\right)= \det(Y- tY^T) = \det(|w|A_2- t(|w|A_2)^T).\]
To see that $|w|A_2$ is Alexander trivial notice that if $J$ is a knot with Seifert form $A_2$, then $|w|A_2$ is a Seifert form for $C_{|w|,1}(J)$. Then Litherland's formula of \Cref{eq:alexsatellite} implies that
 \[\det(|w|A_2- t(|w|A_2)^T)=\Delta_{C_{|w|,1}(J)}(t)= \Delta_J(t^{|w|})=1.\qedhere \]
 \end{proof}

To end this section, we include an example that illustrates that the inequality from \Cref{prop:maininequality} can be sharp and moreover, can sometimes be attained in a nice geometric way.

\begin{Example}[The Mazur pattern]\label{ex:mazur}
The Mazur satellite of the figure-eight knot, $M(4_1)$, has a genus 2 Seifert surface $F$ constructed in \Cref{fig:seifertexample} from two genus 1 surfaces realizing the algebraic genera of $M(U)$ and of $4_1$, respectively.  The proof of~\Cref{prop:maininequality} implies that there is some curve $\gamma$ which bounds a genus 1 subsurface of $F$ and, when considered as a knot in $S^3$, has $\Delta_{\gamma}(t)=1$.
In fact, as illustrated in \Cref{Mazur}, we can pick $\gamma$ to be isotopic to the positive Whitehead double $D(4_1)$.

\begin{figure}[h]\label{fig:mazur41curve}
\includegraphics[height=4cm]
{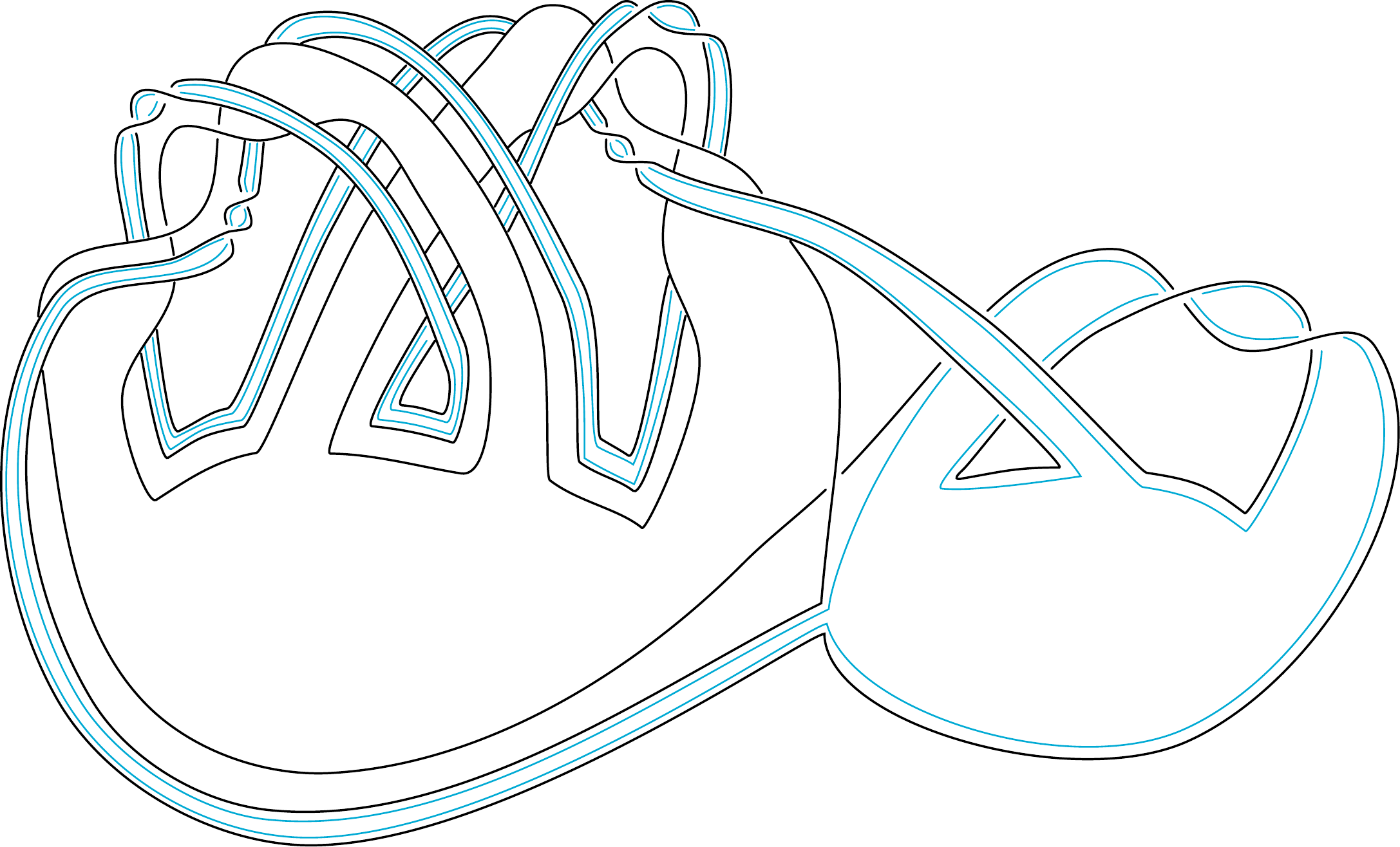}
\caption{A Seifert surface for $M(4_1)$ with separating curve $\gamma$ isotopic to $D(4_1)$.}\label{Mazur}
\end{figure}
\end{Example}

\section{Lower bounds on $\g$ and satellite operations}\label{sec:CG}

In this section, we discuss lower bounds for the topological 4-genera of knots, namely Tristram-Levine signatures and Casson-Gordon signatures, and explain why these invariants cannot be used to disprove~\Cref{conj:(1)holds}. While this is immediate from classical formulas in the case of Tristram-Levine signatures, we consider it a priori somewhat surprising that Casson-Gordon signatures fail to disprove \Cref{conj:(1)holds}. All patterns $P$ in this section are connected, i.e.~they are knots in a solid torus $V$.\\

The Tristram-Levine signatures $\sigma_\omega$ are classical knot invariants~\cite{tristram,levine69}, which have a simple behavior with respect to satellite operations and provide lower bounds for $\g$. Namely, for a pattern $P$ with winding number $w$ one has
\begin{equation}\label{eq:sigformulaforsat}
 \sigma_\omega(P(K))=\sigma_\omega(P(U))+\sigma_{\omega^w}(K) \text{ for all knots $K$ and $\omega\in S^1$; see~\cite{litherland-it}.}\end{equation}
A classical result establishes that signatures give a lower bound for $\g$:
\begin{equation}\label{eq:siglowerboundg4}|\sigma_\omega(K)|\leq2\g(K)\text{ for all knots $K$ and regular $\omega\in S^1$ \cite{taylor,livingston-genus}.}\end{equation}
Here, $\omega\in S^1$ is said to be regular  if it does not arise as the root of an Alexander polynomial of a knot. For example, all prime-power order roots of unity are regular.

As a consequence, one has that
\[\max_{\text{regular }\omega \in S^1}|\sigma_\omega(P(K))|\leq \max_{\text{regular }\omega \in S^1}|\sigma_\omega(P(U))|+\max_{\text{regular }\omega \in S^1}|\sigma_{\omega}(K)|\leq 2\g(P(U)) + 2\g(K),\] which shows the lower bound for $\g(P(K))$ given by the Levine-Tristram signatures of $P(K)$ cannot be used to establish that a pair $P$ and $K$  fails to satisfy the inequality of \Cref{conj:(1)holds}.

The next family of slice genus bounds come from Casson-Gordon signatures by work of Gilmer.
We state the following \Cref{prop:nocg}, our main result of this section, before recalling the relevant background. Informally,
one may paraphrase \Cref{prop:nocg} as `one cannot use Casson-Gordon signatures to prove $\g(P(K))> \g(P(U))+\g(K)$'.

\begin{thm}\label{prop:nocg}
Let $P$ be a pattern and $K$ be any knot. Then $P(K)$ satisfies the Gilmer bounds for $g\geq \g(P(U))+\g(K)$. That is, for any prime power $n$, there is a decomposition of $H_1(\Sig_n(K))$ as described in \Cref{thm:Gilmer} below.
\end{thm}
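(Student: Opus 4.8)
The plan is to reduce the statement to Gilmer's original setup \cite{Gilmer_slicegenus} and then build the required homological decomposition of $H_1(\Sig_n(P(K)))$ by hand, using the satellite structure together with the upper bound $\gZ(P(K)) \leq \gZ(P(U)) + \gZ(K)$ from \Cref{thm:mainIntro}. First I would recall precisely the content of Gilmer's theorem (which will be stated as \Cref{thm:Gilmer} below): a knot $L$ satisfies the Gilmer bounds for genus $g$ if for every prime power $n$ there is a metabolizer-type decomposition $H_1(\Sig_n(L)) = H_0 \oplus \bar H_0$ compatible with the linking form, together with a vanishing condition on certain Casson-Gordon signature averages of characters vanishing on $H_0$, up to an error term controlled by $g$. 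The point is that Gilmer proves such a decomposition exists whenever $L$ bounds a genus $g$ surface in $B^4$; so it suffices to exhibit, abstractly, a genus $\gZ(P(U)) + \gZ(K)$ surface-like decomposition rather than an honest surface, OR—more in the spirit of this paper—to track how $\Sig_n(P(K))$ and its linking form decompose under the satellite operation.

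The key steps, in order, are: (1) Analyze $\Sig_n(P(K))$ via the satellite construction. Writing $V = S^3 \setminus N(\eta)$ as in \Cref{def:sat}, the $n$-fold branched cover $\Sig_n(P(K))$ is obtained by cutting $\Sig_n(P(U))$ along the $n$ lifts of a meridian disk of $\eta$ and regluing in $n$ copies of the $|w|$-fold-cyclic-branched-cover data of $(S^3, K)$—more precisely, it decomposes as a union of $\Sig_n(P(U))$-like pieces and copies of cyclic covers of the knot exterior of $K$. This gives a Mayer-Vietoris computation expressing $H_1(\Sig_n(P(K)))$ in terms of $H_1(\Sig_n(P(U)))$ and (when $w \neq 0$) the relevant homology of covers associated to $K$; when $w = 0$ the curve $\eta$ lifts to $n$ disjoint nullhomologous curves and $\Sig_n(P(K)) \cong \Sig_n(P(U))$, and when $|w| = 1$ one gets the connected-sum decomposition mirroring the `in fact' part of \Cref{prop:maininequality}. (2) Transport Gilmer's decomposition for $\Sig_n(P(U))$ (which exists since $P(U)$ bounds a surface of genus $\gZ(P(U))$, in fact realizing $\galg$) and the decomposition for the relevant cover of $K$ (from $\gZ(K)$, equivalently $g_3(K)$ since signatures only see the $S$-equivalence class) across this Mayer-Vietoris splitting to produce $H_0 \subseteq H_1(\Sig_n(P(K)))$. (3) Check that Casson-Gordon signatures of characters on $\Sig_n(P(K))$ vanishing on $H_0$ split, via Litherland-type satellite formulas for Casson-Gordon invariants, as a sum of a Casson-Gordon signature of $P(U)$ and a Casson-Gordon or Tristram-Levine signature of $K$, each of which is controlled by $\gZ(P(U))$ and $\gZ(K)$ respectively by Gilmer's hypotheses; sum the error terms.

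The main obstacle I expect is step (3): getting a clean satellite formula for Casson-Gordon signatures that is precise enough to match Gilmer's error bookkeeping. Casson-Gordon invariants depend on a choice of character $\chi$ on $H_1(\Sig_n(L))$ of prime power order, and under the satellite operation the characters on $\Sig_n(P(K))$ that restrict to $H_0$ need to be decomposed into a character on the $P(U)$-part and data on the $K$-part; the $K$-part contributes a term which, depending on $w$ and on whether the character is trivial on the glued-in pieces, is either a Casson-Gordon signature of $K$ or an ordinary Tristram-Levine signature $\sigma_{\omega^w}(K)$-type contribution. Handling the case $w = 0$ is easy (nothing changes, so the bound for $P(U)$ alone suffices and $\g(K)$ is not even needed, consistent with the remark that $P(K)$ has trivial Alexander polynomial when additionally $\Delta_{P(U)} = 1$), and $|w| = 1$ reduces to the connected-sum behavior of Casson-Gordon invariants (which is additive, by Gilmer–Litherland). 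The genuinely delicate case is $|w| > 1$, where one must verify that the nontrivial winding does not introduce extra characters whose Casson-Gordon signatures are uncontrolled—this is exactly where the surprising phenomenon of \Cref{thm:mainIntro} (that $|w|$ drops out) must be seen to persist at the level of these finer invariants, and I would expect to invoke the structure of $\Sig_n$ of a $(|w|,1)$-cable together with Litherland's satellite formula \cite{litherland-it} and its Casson-Gordon analogue to push this through.
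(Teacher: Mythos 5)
Your steps (1)--(3) match the paper's strategy: decompose $H_1(\Sigma_n(P(K)))$ via Litherland's branched-cover splitting, import Gilmer decompositions for $P(U)$ and for an appropriate cover of $K$, and control the error via Litherland's satellite formula for Casson--Gordon signatures. The paper does exactly this, case-splitting not on $|w|$ but on $d = \gcd(|w|, n)$: when $d = n$ the $K$-part of the splitting is $H_1(\Sigma_1(K)) = 0$ so only $P(U)$ contributes (this subsumes your $w = 0$ case but also covers $|w| > 1$ with $n \mid w$), and when $d < n$ one combines Gilmer decompositions for $P(U)$ at level $n$ with one for $K$ at level $n/d$, with Litherland's formula
$\sigma_1\tau(P(K), \chi \circ \alpha) = \sigma_1\tau(P(U), \chi_0) + \sum_{i=1}^d \sigma_{\chi_0(\eta_i)}\tau(K, \chi_i)$
doing the bookkeeping. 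Two caveats in your writeup: your opening suggestion to route through \Cref{thm:mainIntro} and a genus-$(\gZ(P(U)) + \gZ(K))$ surface is a dead end for the stated claim, since that would only give the Gilmer bounds for $g \geq \gZ(P(U)) + \gZ(K)$, whereas \Cref{prop:nocg} asserts them already for $g \geq \g(P(U)) + \g(K)$ (a potentially smaller threshold). Relatedly, step (2) should apply Gilmer's theorem to $P(U)$ and $K$ with $\g(P(U))$ and $\g(K)$ rather than $\gZ$ or $g_3$; your parenthetical ``equivalently $g_3(K)$ since signatures only see the $S$-equivalence class'' is also off, since Casson--Gordon invariants are not $S$-equivalence invariants. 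The paper's proof of \Cref{prop:nocg} is logically independent of \Cref{thm:mainIntro} and uses no surface construction at all, only the two cited decompositions plus the triangle inequality.
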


\subsection*{Casson Gordon ala Gilmer}

We will be working with torsion abelian groups $G$ equipped with linking forms $\lambda \colon G \times G \to \mathbb{Q}/ \mathbb{Z}$. In particular, when we write $G \cong G_1 \oplus G_2$  we are implicitly decomposing the pair $(G, \lambda) \cong (G_1, \lambda_1) \oplus (G_2, \lambda_2)$.
Our main examples of such pairs $(G, \lambda)$ will be $G=H_1(\Sigma_n(K))$, the first homology of the $n$th cyclic branched cover of a knot $K$ for $n$ a prime power,
and $\lambda= \lambda_n^K$ the so-called torsion linking form.

\begin{definition}
Given a subgroup $G \leq H_1(\Sig_n(K))$, we call $H \leq G$ an \emph{invariant metabolizer} of $G$ if
\begin{itemize}
\item $H$ is a metabolizer for $\lambda_n|_G$, i.e.~$|H|^2=|G|$ and $\lambda_n|_{H \times H}=0$.\footnote{We warn the reader that the traditional definition of a metabolizer $M$ of $G$, i.e.~a subgroup satisfying
\[M= M^\perp:= \{ g \in G \text{ : } \lambda_n(g, m)=0 \text{ for all }  m \in M\}\]
coincides with this definition only when $\lambda_n|_{G \times G}$ is nonsingular.}

\item $H$ is preserved by the $\Z_n$-action induced by the covering transformation of $\Sigma_n(K)$.
\end{itemize}
\end{definition}

To a knot $K$, a prime power $n$, and a prime power order character $\chi \colon H_1(\Sigma_n(K)) \to \Z_q$, Casson and Gordon associate a collection of rational numbers  $\{\sigma_r \tau(K, \chi)\}_{r=1}^{q}$ called Casson-Gordon signatures~\cite{cg-slice,cg-cob}.
These signatures were employed to give the first examples of non-slice yet algebraically slice knots. Work of Gilmer extended the sliceness obstruction of~\cite{cg-slice,cg-cob} to give lower bounds on $\g$~\cite{Gilmer_slicegenus}, stated here in the reformulation and mild strengthening of~\cite{Millerwinding}.
From now on, for  $n \in \mathbb{N}$ we fix a primitive $n$th root of unity denoted by $\omega_n$.

\begin{thm}[\cite{Gilmer_slicegenus, Millerwinding}] \label{thm:Gilmer}
Let $K$ be a knot and suppose that $\g(K) \leq g$.
Then for any prime power $n$ there is a decomposition of $H_1(\Sig_n(K))= A_1 \oplus A_2$ so that the following properties hold:
\begin{enumerate}[label=(\Roman*)]
\item\label{itemI}  $A_1$ has an even presentation of rank $2(n-1)g$ with signature equal to $\sum_{i=1}^n \sig_K(\om_n^i)$.
\item\label{itemII}  $A_2$ has an invariant metabolizer $B$ such that given any prime power order character $\chi$ which vanishes on $A_1 \oplus B$, we have   \[| \sig_{1} \tau(K, \chi) + \sum_{i=1}^n \sig_K(\om_n^i) | \leq 2ng.
\]
\item\label{itemIII} $A_1 \oplus B$ is also covering transformation invariant.
\end{enumerate}
\end{thm}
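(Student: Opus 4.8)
The plan is to reprove Gilmer's branched-cover bound from \cite{Gilmer_slicegenus} while tracking the extra data needed for the form stated here, following \cite{Millerwinding}. Suppose $F\subset B^4$ is a locally flat oriented surface of genus $g$ with $\partial F=K$, and let $W$ denote the $n$-fold cyclic branched cover of $B^4$ along $F$; it is a compact topological $4$-manifold with $\partial W=\Sig_n(K)$ carrying an action of the covering transformation $\tau$ generating $\Z_n$. First I would record the homological data of $W$ (after, if necessary, a surgery that kills $H_1(W;\Q)$ without changing $\partial W$): an Euler-characteristic count gives $b_2(W)=2(n-1)g$, and the intersection form on $H_2(W;\R)$ splits into $\tau$-eigenspaces whose signatures are identified, via the $G$-signature theorem in the topological category (equivalently the Casson--Gordon eigenvalue computation applied to $W$), with the Tristram--Levine signatures; in particular $\sig(W)=\sum_{i=1}^n\sig_K(\om_n^i)$.

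Next I would extract the algebra from the $\Z_n$-equivariant diagram built from the long exact sequences of $(W,\partial W)$, Poincaré--Lefschetz duality, and universal coefficients. The intersection pairing $H_2(W;\Z)\to H_2(W,\partial W;\Z)\cong H^2(W;\Z)$ presents a linking form, and the torsion of this cokernel contributes a summand $A_1$ of $H_1(\Sig_n(K))=H_1(\partial W)$: it has a presentation of rank $b_2(W)=2(n-1)g$, which one checks is even, and whose signature is $\sig(W)=\sum_{i=1}^n\sig_K(\om_n^i)$, giving \ref{itemI}. For the complementary summand $A_2$, the half-lives--half-dies principle shows that $B:=\mathrm{image}\bigl(\partial\colon H_2(W,\partial W;\Z)\to H_1(\partial W;\Z)\bigr)$, suitably intersected with $A_2$, is self-annihilating of the right order, hence an invariant metabolizer of $\lambda_n|_{A_2}$; $\Z_n$-equivariance of every map involved makes $B$, and also $A_1\oplus B$, covering-transformation invariant, giving \ref{itemIII}. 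Arranging the splitting $H_1(\Sig_n(K))=A_1\oplus A_2$ and the metabolizer $B$ so that all three items hold simultaneously is exactly the refinement of \cite{Gilmer_slicegenus} carried out in \cite{Millerwinding}, whose argument I would follow.

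For \ref{itemII}, let $\chi\colon H_1(\Sig_n(K))\to\Z_q$ be a prime-power-order character vanishing on $A_1\oplus B$. Since $\chi$ then vanishes on $\mathrm{image}(\partial)=\ker\bigl(H_1(\partial W)\to H_1(W)\bigr)$, it extends to a character of $\pi_1(W)$; form the associated $\Z_q$-cover $\widetilde W\to W$, compatibly with the $\Z_n$-structure, and consider its twisted intersection form over the appropriate cyclotomic ring. The Casson--Gordon signature $\sig_1\tau(K,\chi)$ equals, up to the untwisted correction term $\sum_{i=1}^n\sig_K(\om_n^i)$ identified above, the $\om_q$-twisted signature of $W$; bounding the absolute value of the latter by the twisted second Betti number, $\dim H_2^{\om_q}(W)\le b_2(W)+(\text{contribution of }H_1(W))\le 2ng$, yields $\bigl|\sig_1\tau(K,\chi)+\sum_{i=1}^n\sig_K(\om_n^i)\bigr|\le 2ng$.

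I expect the main obstacle to be twofold. First, the $G$-signature/eigenvalue-signature identification $\sig(W)=\sum_i\sig_K(\om_n^i)$ and its twisted analogue must be made rigorous for the locally flat manifold $W$; this is standard but delicate, since one only has a topological $4$-manifold and must invoke the topological versions of the relevant signature theorems. Second---and this is the genuine technical heart---one must produce the single splitting $H_1(\Sig_n(K))=A_1\oplus A_2$ for which the evenness and rank of the presentation of $A_1$, the invariant metabolizer $B\le A_2$, and the joint invariance of $A_1\oplus B$ all hold at once; this bookkeeping is what upgrades the original statement of \cite{Gilmer_slicegenus} to the one stated here. Once these are in place, the signature estimate in \ref{itemII} is a routine ``$|\mathrm{signature}|\le\mathrm{rank}$'' bound.
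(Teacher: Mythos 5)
You have nothing in the paper to measure this against: \Cref{thm:Gilmer} is not proved in this paper at all. It is quoted from the literature---the authors state Gilmer's slice-genus bound \cite{Gilmer_slicegenus} in the reformulation and mild strengthening of \cite{Millerwinding} and then use it as a black box in the proof of \Cref{prop:nocg}. So your proposal is not an alternative route to anything the authors do; it is an outline of the original Gilmer/Casson--Gordon argument itself, which is exactly what the citation points to.

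Judged as a blind proof, your outline assembles the right ingredients (the $n$-fold branched cover $W$ of $B^4$ along a genus-$g$ locally flat surface, $b_2(W)=2(n-1)g$ after killing $H_1(W;\Q)$, $\sig(W)=\sum_{i=1}^n\sig_K(\om_n^i)$ via the eigenspace computation, a metabolizer coming from $\ker\bigl(H_1(\partial W)\to H_1(W)\bigr)$, and a twisted-signature-versus-rank estimate for \ref{itemII}), but it stops short precisely at the points you yourself flag as the technical heart. The simultaneous production of one decomposition $H_1(\Sig_n(K))=A_1\oplus A_2$ with an even rank-$2(n-1)g$ presentation of $A_1$ of the stated signature, an invariant metabolizer $B\leq A_2$, and $\Z_n$-invariance of $A_1\oplus B$ (item \ref{itemIII}) is the content that distinguishes this formulation from Gilmer's original statement, and for it you explicitly defer to \cite{Millerwinding} (``whose argument I would follow''), as you also do for the topological-category $G$-signature input. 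Likewise, in \ref{itemII} the inequality $\dim H_2^{\om_q}(W)\leq 2ng$---where the gap between $2(n-1)g$ and $2ng$ must be absorbed by the twisted homology contributions coming from the branch locus and from $H_1$ of the cover---is asserted rather than estimated. None of this is wrong-headed, but as it stands your text is a roadmap to the cited proofs rather than a proof; for the purposes of this paper the correct move is simply to cite \cite{Gilmer_slicegenus,Millerwinding}, which is what the authors do.
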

Observe that an equivalent formulation of \Cref{thm:Gilmer} states that for $\chi$ an order $q$ character as above we have $|\sig_{r} \tau(K, \chi) + \sum_{i=1}^n \sig_K(\om_n^i) | \leq 2ng$ for any $r=1, \dots q$, since
$\sig_{r}\tau(K, \chi)= \sig_1 \tau(K, r' \chi)$ for some $r'$ and $\chi|_{H}=0$ implies that $r'\chi|_H=0$ as well.
\\

Given a knot $K$ and some $g \geq 0$, we say that $(K,n,g)$ satisfies the Gilmer 4-genus bounds if the conclusions of  \Cref{thm:Gilmer} hold. If $(K,n,g)$ satisfies the Gilmer bound for all prime powers $n$, we say that $(K,g)$ satisfies the Gilmer bound.

\subsection*{Casson-Gordon signatures of a satellite knot}
We will need the following general formula for the Casson-Gordon signatures of a satellite knot.
Recall that given a map $\chi\colon H_1(\Sig_n(K)) \to \Z_q$, we denote by $\sig_{r}\tau(K, \chi)$ the $r$th Casson-Gordon signature of $(K, \chi)$.
In the exceptional case when $n=1$ and so $\Sig_1(K)=S^3$ and $\chi$ must be trivial, we somewhat abusively let $\sig_{r}\tau(K,\chi)$ denote the Tristram-Levine signature $\sigma_K(\omega_q^r)$.

\begin{thm}[Litherland] \label{thm:litherland}
Let $P$ be a pattern described by a curve $\eta$ in the complement of $P(U)$, i.e.~the solid torus $V$ is $S^3 \smallsetminus \nu(\eta)$.
 Suppose  $P$ has winding number $m$ and let $n\in \N$.
Let $d= \gcd(m,n)$.  Then there is a canonical covering transformation invariant isomorphism
\begin{align*}
 \alpha \colon H_1(\Sig_n(P(K))) \to H_1(\Sig_n(P(U))) \oplus \bigoplus_{i=1}^d H_1(\Sig_{n/d}(K)).
 \end{align*}
Supposing also now that $n$ and $q$ are prime powers, let
\[ \chi=(\chi_0, \chi_1, \dots, \chi_d) \colon H_1(\Sig_n(P(U))) \oplus \bigoplus_{i=1}^d H_1(\Sig_{n/d}(K)) \to \Z_q. \]
 Let the homology classes of the $d$ lifts of $\eta$ to $\Sigma_n(P(U))$ be denoted by $\eta_1, \dots \eta_d$.
Then the Casson-Gordon signature $\sigma_1\tau(P(K), \chi \circ \alpha)$ is given by
\begin{align*}
\sigma_1 \tau(P(K), \chi \circ \alpha)=
 \sigma_1\tau(P(U), \chi_0)+ \sum_{i=1}^d \sigma_{\chi_0(\eta_i)} \tau(K, \chi_i).
\end{align*}
\end{thm}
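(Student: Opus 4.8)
The plan is to follow Litherland's original strategy for computing signatures of satellites (from \cite{litherland-it, Litherland_satellite}), adapted to the Casson--Gordon setting. The central tool is the following well-known structural fact about cyclic branched covers of satellites: if $m$ is the winding number of $P$, then $\Sigma_n(P(K))$ is obtained from $\Sigma_n(P(U))$ by removing tubular neighborhoods of the $d=\gcd(m,n)$ lifts $\widetilde\eta_1,\dots,\widetilde\eta_d$ of $\eta$ and gluing in $d$ copies of the complement of $\widetilde K \subset \Sigma_{n/d}(K)$ (each copy covering $S^3 \setminus \nu(K)$ with degree $n/d$, since $\eta$ lifts to $n/d$ arcs no wait --- each $\widetilde\eta_i$ is a circle and the piece glued in along it is the $(n/d)$-fold cyclic cover of the knot exterior of $K$). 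This decomposition is precisely what produces the isomorphism $\alpha$ on $H_1$, and its covering-transformation invariance is built into the construction since the $\Z_n$-action permutes the $\widetilde\eta_i$ cyclically in orbits and acts on each glued-in piece through the residual $\Z_{n/d}$-action. So the first step is to recall/set up this Mayer--Vietoris description carefully and record the isomorphism $\alpha$ together with its equivariance.

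Next I would compute the Casson--Gordon signature $\sigma_1\tau(P(K), \chi\circ\alpha)$ via its definition as a signature defect: one chooses a $\Z_q$-cover (or $\Z_n \times \Z_q$-cover, after also taking the $n$-fold branched cover) $W$ of a $4$-manifold bounded by $\Sigma_n(P(K))$ (or rather of the surgered manifold), and forms $\frac{1}{q}(\mathrm{sign}_{\omega_q^r} W - \mathrm{sign}\, W)$ suitably interpreted, minus a correction term. The key point is additivity of signatures under gluing (Novikov additivity / Wall non-additivity controlled by the fact that the gluing happens along pieces with prescribed behavior): one builds a bounding $4$-manifold for $\Sigma_n(P(K))$ by gluing a bounding manifold for $\Sigma_n(P(U))$ to $d$ copies of a bounding manifold for $\Sigma_{n/d}(K)$, along the (solid-torus $\times I$)-like cobordisms determined by $\eta$. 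The character $\chi\circ\alpha$ restricts on the $\Sigma_n(P(U))$ piece to $\chi_0$ and on the $i$-th $K$-piece to $\chi_i$; crucially, the character's value $\chi_0(\widetilde\eta_i)$ determines which $\Z_q$-twisted signature of the $K$-piece appears, and because the $K$-piece is a cyclic cover of the knot exterior, that twisted signature is exactly the Tristram--Levine signature $\sigma_{\chi_0(\eta_i)}\tau(K, \chi_i)$ (interpreting $\sigma_r\tau(K,\chi)$ as $\sigma_K(\omega_q^r)$ when $n/d = 1$, matching the abusive convention in the statement). Assembling these contributions by additivity gives the displayed formula.

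The main obstacle I expect is the bookkeeping around \emph{which} lift of $\eta$ carries \emph{which} index and ensuring the twisting parameters match up correctly --- concretely, verifying that the $\Z_q$-cover of the $i$-th glued-in $K$-piece, when restricted from the ambient cover of $\Sigma_n(P(K))$, really is the cover classified by the pair $(\text{multiplication by }\chi_0(\eta_i) \text{ on the } \Z_n\text{-factor}, \chi_i)$, and then identifying the resulting equivariant signature with the Tristram--Levine invariant $\sigma_{\chi_0(\eta_i)}\tau(K,\chi_i)$. This requires care with basepoints, orientations of the $\eta_i$, and the precise normalization of Casson--Gordon signatures (the difference between $\sigma_r\tau$ and its "averaged" or "integrated" variants), and handling the potential Wall non-additivity correction terms along the $3$-manifold pieces where the bounding $4$-manifolds are glued --- though as in Litherland's arguments these correction terms either vanish or cancel because the intersection pairing on the gluing region is trivial. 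A secondary technical point is the degenerate case $n=1$ (hence $d=1$, and the "$K$-piece" is just $S^3\setminus\nu(K)$), where the formula must reduce to the classical Tristram--Levine satellite formula \eqref{eq:sigformulaforsat}; this serves as a useful consistency check for the sign conventions rather than a real difficulty.
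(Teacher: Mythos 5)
The paper does not prove this statement: it is recalled as a theorem of Litherland and invoked as a black box in the proof of Theorem~\ref{prop:nocg}, so there is no in-paper argument to compare your sketch against. That said, your outline follows the route one expects from Litherland's original work. Two remarks on the sketch itself. First, the hedging parenthetical ``$\eta$ lifts to $n/d$ arcs no wait'' should be resolved cleanly: $\eta$ has exactly $d=\gcd(m,n)$ preimage circles $\widetilde\eta_1,\dots,\widetilde\eta_d$ in $\Sigma_n(P(U))$, each covering $\eta$ with degree $n/d$, which is precisely why the piece glued in along $\widetilde\eta_i$ is the $(n/d)$-fold cyclic cover of $S^3\setminus\nu(K)$; this is also where the direct-sum decomposition in $\alpha$ comes from, and the $\Z_n$-equivariance is visible because the deck group permutes the $\widetilde\eta_i$ cyclically. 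Second, everything you list as ``the main obstacle'' --- matching the $\Z_n\times\Z_q$-cover data on the glued pieces to the pair $(\chi_0(\eta_i),\chi_i)$, identifying the resulting equivariant signature with $\sigma_{\chi_0(\eta_i)}\tau(K,\chi_i)$ under the convention that this reduces to a Tristram--Levine signature when $n/d=1$, and controlling Wall non-additivity along the gluing tori --- is exactly the content of the proof rather than a peripheral check. As written, the proposal is a sound plan whose hard steps are named but not executed; it is not yet a proof of the theorem.
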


\subsection*{Proof of~\Cref{prop:nocg}}
We now use Litherland's formula for Casson-Gordan sigantures and Gilmer's bounds for $P(U)$ and $K$ to prove~\Cref{prop:nocg}.
\begin{proof}[Proof of~\Cref{prop:nocg}]
Let $g_K= \g(K)$, $g_P= \g(P(U))$, and let $n$ be an arbitrary prime power. We show that $(P(K), n, g)$ satisfies the Gilmer bounds for $g \geq g_P+g_K$.

By \Cref{thm:Gilmer} there is a decomposition of $H_1(\Sigma_n(P(U))) = A^P_1 \oplus A^P_2$ with the following properties:
\begin{enumerate}[label=(P\Roman*)]
\item\label{itemPI} $A^P_1$ has an even rank $2(n-1)g_P$ presentation of signature $\sum_{i=1}^{n} \sig_{P(U)}(\omega_{n}^i)$.
\item\label{itemPII} $A^P_2$ has an invariant metabolizer $B^P$ such that if $\chi \colon H_1(\Sig_{n}(P(U))) \to \Z_q$ is a character of prime power order vanishing on $A^P_1 \oplus B^P$, then
\[\left|\sig_1\tau(P(U), \chi) + \sum_{i=1}^{n} \sig_{P(U)}(\omega_{n}^i)\right| \leq 2ng_P.
\]
\item\label{itemPIII} $A^P_1 \oplus B^P$ is also covering transformation invariant.
\end{enumerate}

Write the algebraic winding number of $P$ as $m=p^a m'$, where $p^a= \gcd(m, n)$. So $n=p^b$ for $b \geq a \geq 0$. Note that when $a=b$, i.e.~$n=p^a$ divides $m$, we have that $\eta$ lifts to $n$ distinct curves in $\Sigma_n(P(U))$ and when $a<b$ we have that $\eta$ lifts to strictly fewer than $n$ curves in $\Sigma_n(P(U))$.\\

{\bf Case 1}: $a=b$, so $n=p^a$ divides $m$.

Decompose $H_1(\Sig_n(P(K)))\cong H_1(\Sig_n(P(U))) \oplus 0$ using $\alpha$ from~\Cref{thm:litherland} and take $A_1=\alpha^{-1}(A^P_1)$, $A_2=\alpha^{-1}(A^P_2)$, and $B=\alpha^{-1}(B^P)$.

To check~\ref{itemI}, we observe that

\begin{equation}\label{eq:sumsigmaP(K)b=a}
\sum_{i=1}^n \sigma_{P(K)}(\omega_n^i)\overset{\text{\eqref{eq:sigformulaforsat}}}{=}
\sum_{i=1}^n \left( \sigma_{P(U)}(\omega_n^i) + \sigma_K(\omega_n^{im}) \right)
= \sum_{i=1}^n \left(\sigma_{P(U)}(\omega_n^i)+ \sigma_K(\omega_n^{inm'})\right)
=\sum_{i=1}^n \left(\sigma_{P(U)}(\omega_n^i)\right)
.
\end{equation}
Thus, $A_1$ has an even presentation of rank $2(n-1)g_K$ with signature $\sum_{i=1}^n \sigma_{P(K)}(\omega_n^i)$ by (PI). Noting that the trivial group $0$ certainly has an even presentation of rank $2(n-1)(g-g_K)$ and signature 0, we have that $A_1$ has an even presentation of rank $2(n-1)g_K+2(n-1)(g-g_K)$ with signature $\sum_{i=1}^n \sigma_{P(K)}(\omega_n^i)+0$. This concludes the proof of~\ref{itemI}.

To check~\ref{itemII}, we calculate that, given any $\chi: H_1(\Sig_n(P(U))) \to \Z_q$ of prime power order with $\chi|_{A_1\oplus B}=0$, we have
\begin{align*}
\begin{array}{rcl}
\left|\sig_1 \tau(P(K), \chi \circ \alpha) +\sum_{i=1}^n\sigma_{P(K)}(\omega_n^i)\right|
&\overset{\text{\eqref{eq:sumsigmaP(K)b=a}}}{=}
& \left|\sig_1 \tau(P(K), \chi \circ \alpha)+\sum_{i=1}^n \left(\sigma_{P(U)}(\omega_n^i)\right)\right|\\
&\overset{\text{\ref{thm:litherland}}}{=}
&\left| \sig_1 \tau(P(U), \chi)+ \sum_{i=1}^n \sig_K(\omega_q^{\chi(\eta_i)})+\sum_{i=1}^n \left(\sigma_{P(U)}(\omega_n^i)\right)\right|\\
&\leq &\left| \sig_1 \tau(P(U), \chi)+\sum_{i=1}^n \left(\sigma_{P(U)}(\omega_n^i)\right)\right|+\left|\sum_{i=1}^n \sig_K(\omega_q^{\chi(\eta_i)})\right|\\
&\overset{\text{\ref{itemPII},\eqref{eq:siglowerboundg4}}}{\leq}
&2ng_P+2ng_K \\
& \leq &2ng.
\end{array}
\end{align*}

Finally, \ref{itemIII} is immediate from \ref{itemPIII} and the covering transformation invariance of $\alpha$.
\\

{\bf Case 2}: $b>a$.

By \Cref{thm:Gilmer} there is a decomposition of $H_1(\Sig_{p^{b-a}}(K)) = A^K_1 \oplus A^K_2$ with the following properties:
\begin{enumerate}[label=(K\Roman*)]
\item\label{itemKI}$A^K_1$ has an even rank $2(p^{b-a}-1)g_K$ presentation of signature $s= \sum_{i=1}^{p^{b-a}} \sig_K(\omega_{p^{b-a}}^i)$.
\item\label{itemKII} $A^K_2$ has an invariant metabolizer $B^K$ such that if $\chi \colon H_1(\Sig_{p^{b-a}}(K)) \to \Z_q$ is a character of prime power order $q$ vanishing on $A_1 \oplus B$, then
\[\left|\sig_1\tau(K, \chi) + s\right| \leq 2(p^{b-a})g_K.
\]
\item\label{itemKIII} $A_1 \oplus B$ is also covering transformation invariant.
\end{enumerate}

Decompose $H_1(\Sig_n(P(K)) \cong H_1(\Sig_n(P(U))) \oplus \bigoplus_{i=1}^{p^a} H_1(\Sig_{p^{b-a}}(K))$ using $\alpha$ from~\Cref{thm:litherland} and take \[A_1=\alpha^{-1}\left(A_1^P \oplus \bigoplus_{i=1}^{p^a} A^K_1 \right) \et A_2=\alpha^{-1}\left(A_2^P \oplus \bigoplus_{i=1}^{p^a} A_2^K\right).\]
Observe that by taking the direct sum of our assumed presentations for $A_1^P$ and $A_1^K$ from \ref{itemPI} and~\ref{itemKI}, respectively, we have that  $A_1\cong A_1^P \oplus \bigoplus_{i=1}^{p^a} A_1^K$ has an even presentation of rank
\[2(p^b-1)g_P+p^a 2(p^{b-a}-1)g_K=2(p^b-1)g_P+ 2(p^b-p^a)g_K \leq 2(p^b-1)g=2(n-1)g\]
and  signature $\sum_{i=1}^n \sigma_{P(U)}(\omega_n^i)+p^a s$.
However, since $p^a=\gcd(p^b, p^a m')$ we know that $(p, m')=1$ and so $\{\omega_{p^{b-a}}^{m' j}: j=1, \dots, p^{b-a}\}=\{\omega_{p^{b-a}}^{i}: i=1, \dots, p^{b-a}\}$. It follows that
\begin{align*}
p^a  s&= p^a \sum_{i=1}^{p^{b-a}} \sig_K(\omega_{p^{b-a}}^i)
= p^a \sum_{j=1}^{p^{b-a}} \sig_K(\omega_{p^{b-a}}^{m'j})
= \sum_{j=1}^{p^b} \sig_K(\omega_{p^{b-a}}^{m'j})
= \sum_{j=1}^{p^b} \sig_K(\omega_{p^{b}}^{p^am'j})=\sum_{i=1}^{n} \sig_K(\omega_{n}^{mi})
\end{align*}
and thus \begin{equation}\label{eq:sumsig(P(K))b>a}
\sum_{i=1}^n \sigma_{P(U)}(\omega_n^i)+p^a  s= \sum_{i=1}^{n}\left(\sig_{P(U)}(\omega_{n}^i)+ \sig_K(\omega_{n}^{mi})\right)= \sum_{i=1}^{n} \sig_{P(K)}(\omega_{n}^i).\end{equation}
This concludes the proof of \ref{itemI} since the even presentation of $A_1$ of rank $2(p^b-1)g_P+ 2(p^b-p^a)g_K$ and signature $\sum_{i=1}^{n} \sig_{P(K)}(\omega_{n}^i)$ just described can be increased if necessary to have rank $2(n-1)g=2(p^b-1)g$ by connect sum with an even presentation of the trivial group with signature $0$ and appropriate rank.

Now, note that $B^P \oplus \bigoplus_{i=1}^{p^a} B^K$ is an invariant metabolizer for $A_1^P \oplus \bigoplus_{i=1}^{p^a} A_2^K$ and set \[B=\alpha^{-1}\left(B^P\oplus\bigoplus_{i=1}^{p^a} B^K\right).\]
We further note that $A_1\oplus B$ is covering transformation invariant by the covering transformation invariance of $\alpha$
and the fact that
\[
\alpha(A_1\oplus B)=\alpha(A_1)\oplus \alpha(B)= \left(A_1^P \oplus \bigoplus_{i=1}^{p^a} A_1^K \right) \oplus \left(B^P\oplus\bigoplus_{i=1}^{p^a} B^K\right)= B^P \oplus \bigoplus_{i=1}^{p^a} (A_1^K \oplus B^K)
\]
is covering transformation invariant by~\ref{itemPIII},~\ref{itemKIII} , which establishes~\ref{itemIII}.

To check~\ref{itemII}, let
\[\chi= (\chi_0, \chi_1, \dots, \chi_{p^a}): H_1(\Sig_n(P(U))) \oplus \bigoplus_{i=1}^{p^a} H_1(\Sig_{p^{b-a}}(K)) \to \Z_q\] be a character of prime power order, and suppose that $\chi$ vanishes on
\[{\alpha(A_1\oplus B)= (A_1^P \oplus B^P) \oplus \bigoplus_{i=1}^{p^a} (A_1^K \oplus B^K)}.\]
 In particular, $\chi_0$ vanishes on $A_1^P\oplus B^P$ and $\chi_i$ vanishes on the $i$th copy of $A_1^K\oplus B^K$.
Now observe that
\begin{align*}
\left|\sig_1 \tau(P(K), \chi \circ \alpha)+ \sum_{i=1}^{p^b} \sig_{P(K)}(\omega_{p^b}^i) \right|\underset{\eqref{eq:sumsig(P(K))b>a}}{\overset{\text{\ref{thm:litherland}}}{=}} &\left| \sig_1 \tau(P(U), \chi_0)+ \sum_{i=1}^{p^a} \sig_{\chi_0(\eta_i)} \tau(K, \chi_i) +\sum_{i=1}^n \sigma_{P(U)}(\omega_n^i)+p^a s\right|\\
\leq&\left|\sig_1 \tau(P(U), \chi_0)+\sum_{i=1}^n \sigma_{P(U)}(\omega_n^i)\right|+\left| \sum_{i=1}^{p^a} \left(\sig_{\chi_0(\eta_i)} \tau(K, \chi_i) +s\right)\right|\\
\leq&\left|\sig_1 \tau(P(U), \chi_0)+\sum_{i=1}^n \sigma_{P(U)}(\omega_n^i)\right|+\sum_{i=1}^{p^a} \left| \sig_{\chi_0(\eta_i)} \tau(K, \chi_i) +s\right|  \\
\underset{\text{\ref{itemKII}}}{\overset{\text{\ref{itemPII}}}{\leq}}& 2ng_P+\sum_{i=1}^{p^a} 2p^{b-a}g_K=2ng_P+2ng_K\leq 2ng. \qedhere
\end{align*}

\end{proof}

We remark that, besides Tristram-Levine signatures and Gilmer's Casson-Gordon obstruction, the only known obstruction  to being a  knot with small $\g$ comes from recent work of Cha-Miller-Powell~\cite{cmp}. This work uses certain $L^{(2)}$ $\rho$-invariants to show that certain families of knots with vanishing Tristram-Levine signature functions and vanishing Casson-Gordon sliceness obstructions still have  members with arbitrarily large $\g$. Moreover, their constructions are all of the form $J=\#_{i=1}^n P(K_i)$ for $P$ a winding number 0 satellite with $P(U)$ slice.
However, these techniques  only show  $\g(J)\geq g$ for $g$ orders of magnitude smaller than $\sum_{i=1}^n \g(K_i)$, and hence seem ill-suited to trying to disprove \Cref{conj:(1)holds}.

\section{Contrast with the smooth setting}\label{sec:Exs}

We will use the following result of Hom~\cite{hom} on how the Heegaard Floer invariant $\tau$ behaves under cabling.

\begin{thm}[Hom]\label{rem:tauqp}
Let $K$ be a knot with $\gsm(K)= \tau(K)>0$ then for any $w>0$ we have
\[ \gsm(C_{w,1}(K))= \tau(C_{w,1}(K))= w \tau(K) = w \gsm(K)\]
and $\epsilon(C_{w,1}(K))=\epsilon(K)=+1$.
\end{thm}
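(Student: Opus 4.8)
The plan is to deduce everything from Hom's computations of $\tau$ and $\epsilon$ under cabling, once we know that $\epsilon(K)=1$. Recall from \cite{hom} the cabling formulas: if $\epsilon(K)=1$ then $\tau(C_{p,q}(K))=p\tau(K)+\tfrac{(p-1)(q-1)}{2}$, and $\epsilon(C_{p,q}(K))=\epsilon(K)$ whenever $\epsilon(K)\neq 0$; in particular, assuming $\epsilon(K)=1$, we get $\tau(C_{w,1}(K))=w\tau(K)$ and $\epsilon(C_{w,1}(K))=1$. Granting this, the equalities in the theorem follow by squeezing: since $C_{w,1}(U)=U$, a minimal genus smooth surface for $K$ can be cabled to a genus $w\,\gsm(K)$ surface for $C_{w,1}(K)$, so $\gsm(C_{w,1}(K))\leq w\,\gsm(K)$; combining this with the Ozsv\'ath--Szab\'o inequality $\tau(K)\leq\gsm(K)$ and the hypothesis $\gsm(K)=\tau(K)$ gives
\[
w\tau(K)=\tau(C_{w,1}(K))\leq\gsm(C_{w,1}(K))\leq w\,\gsm(K)=w\tau(K),
\]
forcing $\gsm(C_{w,1}(K))=\tau(C_{w,1}(K))=w\tau(K)=w\,\gsm(K)$.

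What remains, and what I expect to be the crux, is establishing $\epsilon(K)=1$; I would do this by excluding the other two values of $\epsilon(K)\in\{-1,0,1\}$. If $\epsilon(K)=0$, then $\tau(K)=0$ by the standard vanishing property of Hom's $\epsilon$-invariant, contradicting $\tau(K)>0$. If $\epsilon(K)=-1$, then the corresponding cabling formula $\tau(C_{p,q}(K))=p\tau(K)+\tfrac{(p-1)(q+1)}{2}$ applied with $(p,q)=(2,1)$ gives $\tau(C_{2,1}(K))=2\tau(K)+1$; but the same squeeze as above yields $2\tau(K)+1=\tau(C_{2,1}(K))\leq\gsm(C_{2,1}(K))\leq 2\,\gsm(K)=2\tau(K)$, which is absurd. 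Hence $\epsilon(K)=1$.

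The only technical points to verify carefully are (i) that each quoted statement from \cite{hom} is invoked in exactly the form stated — the $\tau$-cabling formulas for $\epsilon(K)=\pm1$, the identity $\epsilon(C_{p,q}(K))=\epsilon(K)$ for $\epsilon(K)\neq0$, and the implication $\epsilon(K)=0\Rightarrow\tau(K)=0$ — and (ii) the elementary surface bound $\gsm(C_{w,1}(K))\leq w\,\gsm(K)$: one takes $w$ parallel pushed-in copies of a genus $\gsm(K)$ smooth slice surface for $K$ inside a tubular neighborhood and connects them with bands carrying the single meridional twist, using that $C_{w,1}(U)$ is unknotted so that the boundary of the resulting connected surface is exactly $C_{w,1}(K)$. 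I do not anticipate any difficulty beyond correctly assembling these pieces.
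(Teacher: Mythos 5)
The paper does not prove this statement---it is cited to Hom's work and stated without argument---so there is no internal proof to compare against. On its own merits, your derivation is correct. The quoted ingredients from Hom (the cabling formulas $\tau(C_{p,q}(K))=p\tau(K)+\tfrac{(p-1)(q\mp1)}{2}$ according to $\epsilon(K)=\pm1$, the stability $\epsilon(C_{p,q}(K))=\epsilon(K)$ for $\epsilon(K)\neq0$, and $\epsilon(K)=0\Rightarrow\tau(K)=0$) are all as you state them, and combined with $|\tau|\leq\gsm$ the squeeze closes once you know $\tau(C_{w,1}(K))=w\tau(K)>0$. The elementary bound $\gsm(C_{w,1}(K))\leq w\,\gsm(K)$ is also correct: $w$ pushed-in $0$-framed parallel copies of a minimal surface for $K$ bound the $0$-framed $w$-component cable, and a genus-$0$ cobordism built from $w-1$ saddles inside $\nu(K)\times I$ joins that to $C_{w,1}(K)$; an Euler characteristic count gives total genus $w\,\gsm(K)$.

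The nice feature of your write-up is the self-contained argument that $\epsilon(K)=+1$: you exclude $\epsilon(K)=0$ via $\tau(K)>0$, and you exclude $\epsilon(K)=-1$ by running the same squeeze against the $\epsilon=-1$ cabling formula at $(p,q)=(2,1)$, getting $2\tau(K)+1\leq 2\tau(K)$. Hom does have a direct statement of the form ``$\tau=\gsm>0$ implies $\epsilon=1$,'' so one could also invoke that as a black box; your route re-derives it from the cabling formulas alone, which is arguably cleaner to cite in this context. I see no gaps.
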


\begin{prop}\label{prop:smoothcontrast} Let $P$ be a winding number $w$ pattern. Then
$\displaystyle \lim_{n \to \infty} \frac{ \gsm(P(T_{2,2n+1}))}{\gsm(T_{2,2n+1})}=|w|.$
\end{prop}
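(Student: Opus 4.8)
The plan is to establish the two inequalities $\gsm(P(T_{2,2n+1}))\le |w|\,\gsm(T_{2,2n+1})$ and $\gsm(P(T_{2,2n+1}))\ge |w|\,\gsm(T_{2,2n+1}) - C$ for a constant $C=C(P)$ independent of $n$, and then divide by $\gsm(T_{2,2n+1})=n$ and let $n\to\infty$. The upper bound is the easy direction: the general satellite inequality $\gsm(P(K))\le g_4^{\mathrm{sm}}(P)+|w|\,\gsm(K)$ quoted in the introduction gives $\gsm(P(T_{2,2n+1}))\le g_4^{\mathrm{sm}}(P)+|w|\,n$, so after dividing by $n$ the $g_4^{\mathrm{sm}}(P)/n$ term vanishes in the limit.

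For the lower bound I would use the $\tau$-invariant. Since $\tau$ is a smooth concordance homomorphism with $|\tau(J)|\le\gsm(J)$, and since $\tau$ has a controlled behaviour under satellites, the idea is: $T_{2,2n+1}$ is an L-space knot with $\tau(T_{2,2n+1})=\gsm(T_{2,2n+1})=n$ and $\epsilon(T_{2,2n+1})=+1$, so by Hom's satellite formula for $\tau$ (the companion-formula version: if $\epsilon(K)=1$ then $\tau(P(K))=\tau(P(T_{2,3}\# \cdots))$ can be replaced by winding-number-times-$\tau(K)$ plus a pattern-dependent constant) we get $\tau(P(T_{2,2n+1}))=|w|\,n + \tau(P(U))$ — more precisely, Hom's theorem states that when $\epsilon(K)=1$, $\tau(P(K))$ equals $\tau(P(T_{2,3}))$-type quantity, and the clean statement we need is that $\tau(P(K))$ depends on $K$ only through $(\tau(K),\epsilon(K))$, so for all large $n$ the values $\tau(P(T_{2,2n+1}))$ grow linearly with slope exactly $|w|$. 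Hence $\gsm(P(T_{2,2n+1}))\ge |\tau(P(T_{2,2n+1}))| = |w|\,n - O(1)$.

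Combining, $|w| - O(1/n) \le \dfrac{\gsm(P(T_{2,2n+1}))}{n}\le |w| + \dfrac{g_4^{\mathrm{sm}}(P)}{n}$, so the limit is $|w|$.

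The main obstacle is making the lower bound fully rigorous, because Hom's $\tau$ satellite formula in the form stated in the excerpt (\Cref{rem:tauqp}) is given only for the specific pattern $C_{w,1}$, not for an arbitrary winding number $w$ pattern $P$. To handle a general $P$ I would either (i) invoke the more general version of Hom's result, namely that $\tau(P(K))$ is determined by $\tau(K)$ and $\epsilon(K)$ whenever $\epsilon(K)\neq 0$ (so for $K=T_{2,2n+1}$, which has $\epsilon=1$, the quantity $\tau(P(T_{2,2n+1}))-\tau(P(T_{2,3}))$ equals $|w|(n-1)$), or (ii) reduce to the cable case by the standard trick that $P$ factors, up to controlled genus change, through a cabling: there is a knot $J_P$ (namely $P(U)$ together with the winding) so that $P(T_{2,2n+1})$ and $P(U)\# C_{w,1}(T_{2,2n+1})$ differ by a cobordism of genus bounded independently of $n$, whence $|\gsm(P(T_{2,2n+1})) - \gsm(C_{w,1}(T_{2,2n+1}))|$ is bounded and \Cref{rem:tauqp} applies directly to $C_{w,1}(T_{2,2n+1})$. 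Option (i) is cleaner if one is willing to cite the stronger form of Hom's theorem; I would take that route and only fall back on (ii) if a self-contained argument is preferred. One should also note the trivial case $w=0$ is not excluded here — when $w=0$ the limit is $0$ since $\gsm(P(K))\le g_4^{\mathrm{sm}}(P)$ is then bounded — but as the statement asserts $|w|$, the intended reading is $w\neq 0$, matching the dichotomy in \Cref{cor:limitis1intop}.
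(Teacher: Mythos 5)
Your option (ii) is precisely the paper's argument: the authors observe that since $P$ and $C_{w,1}$ are homologous in the solid torus $V$, there is a genus-$g(F)$ cobordism in $V\times I$ between them, which after gluing in $S^3\setminus N(K)\times I$ gives $|\gsm(P(K))-\gsm(C_{w,1}(K))|\le g(F)$ for every $K$, with $g(F)$ independent of $K$; then \Cref{rem:tauqp} computes $\gsm(C_{w,1}(T_{2,2n+1}))=wn$ exactly, and the bounded difference washes out in the limit. Note that this two-sided cobordism bound gives the upper bound and the lower bound simultaneously, so the paper never needs the separate inequality $\gsm(P(K))\le\gsm(P)+|w|\gsm(K)$ that you invoke for the easy direction — that step of yours is fine, just redundant once (ii) is in place.

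Your preferred option (i), however, has a real gap. There is no theorem of the generality you are invoking. Hom's result cited in the paper (\Cref{rem:tauqp}) is specifically for the cable patterns $C_{w,1}$, and the genuine general machinery for computing $\tau$ of satellites (bordered Heegaard Floer homology) shows that $\tau(P(K))$ depends on the full bordered invariant $\widehat{CFD}(S^3\setminus N(K))$, which is \emph{not} determined by the pair $(\tau(K),\epsilon(K))$ for arbitrary patterns $P$. The statement ``$\tau(P(K))$ is determined by $\tau(K)$ and $\epsilon(K)$ whenever $\epsilon(K)\neq 0$'' is not available; what is available are pattern-by-pattern formulas (cables by Hedden and Hom, Whitehead doubles by Hedden, the Mazur pattern by Levine, etc.). Even restricting $K$ to the L-space knots $T_{2,2n+1}$ does not trivially yield a clean linear-growth statement with slope $|w|$ without essentially redoing the cobordism reduction of option (ii). So you should discard (i) and commit to (ii), which is the correct and self-contained proof.

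Two small points: you should cite the Cochran--Harvey style argument (or at least flag it) for the inequality $\gsm(P(K)\,\#\,{-C_{w,1}(K)})\le g(F)$, since it is not entirely formal that the cobordism in $V\times I$ produces a cobordism in $S^3\times I$ of the same genus between $P(K)$ and $C_{w,1}(K)$ after satelliting along $K$; and your closing remark about $w=0$ is correct and consistent with \Cref{cor:limitis1intop} — the statement as written does cover $w=0$, yielding limit $0=|w|$.
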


\begin{proof}
Let $P$ be  a winding number $w$ pattern. Since $P$ and $C_{w,1}$ are homologous in $V$, there exists a surface $F$ in $V \times I$ with boundary $P \times \{1\} \sqcup -C_{w,1} \times \{0\}$. One can use an argument analogous to the one that shows that patterns have a well defined action on concordance,  see Cochran-Harvey~\cite{CHmetrics}, to show that  for any knot $K$,
\[|\gsm(P(K))- \gsm(C_{w,1}(K))| \leq \gsm(P(K) \#-C_{w,1}(K)) \leq g(F).
\]
Therefore, since $\lim_{n \to \infty} \gsm(T_{2,2n+1})= \infty$, we have as desired that
\[
\lim_{n \to \infty}\frac{ \gsm(P(T_{2,2n+1}))}{\gsm(T_{2,2n+1})} = \lim_{n \to \infty}\frac{ \gsm(C_{w,1}(T_{2,2n+1}))}{\gsm(T_{2,2n+1})}=
\lim_{n \to \infty}\frac{wn}{n}=
w.   \qedhere
\]
\end{proof}

\begin{remark} The above argument shows that for any collection $\{K_n\}$ of quasipositive knots (or knots with $\tau(K_n)= \gsm(K_n)\neq 0$ and $\epsilon(K_n)=+1$)  with $\lim_{n \to \infty} g_4(K_n)= \infty$ we have $\displaystyle \lim_{n \to \infty} \frac{ \gsm(P(K_n))}{\gsm(K_n)}=|w|.$
\end{remark}

The following result, together with \Cref{prop:maininequality} in the winding number 0 case,
immediately implies \Cref{cor:limitis1intop}, since $\lim_{n \to \infty} \g(T_{2,2n+1})= \infty$.

\begin{prop}\label{prop:topgenussatellitetorus}
Let $P$ be a winding number $w>0$ pattern.
Then \[ -\g(P(U)) \leq \g(P(T_{2,2n+1}))-\g(T_{2,2n+1}) \leq  \gZ(P(U))\]
\end{prop}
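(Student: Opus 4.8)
The plan is to establish the two-sided bound by combining the already-proved satellite inequality for $\gZ$ with a lower-bound argument using Tristram--Levine signatures, exactly in the spirit of the proof of \Cref{cor:simpleg4equality}. For the upper bound $\g(P(T_{2,2n+1})) \leq \g(T_{2,2n+1}) + \gZ(P(U))$, I would simply invoke \Cref{thm:mainIntro}: we have $\g(P(T_{2,2n+1})) \leq \gZ(P(T_{2,2n+1})) \leq \gZ(P(U)) + \gZ(T_{2,2n+1})$, and since $T_{2,2n+1}$ is alternating (indeed a two-bridge knot) its topological, $\Z$-slice, and Seifert genera all coincide, so $\gZ(T_{2,2n+1}) = g_3(T_{2,2n+1}) = n = \g(T_{2,2n+1})$. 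This gives the right-hand inequality immediately.

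For the lower bound $\g(P(T_{2,2n+1})) \geq \g(T_{2,2n+1}) - \g(P(U))$, I would use the signature formula \eqref{eq:sigformulaforsat}, namely $\sigma_\omega(P(T_{2,2n+1})) = \sigma_\omega(P(U)) + \sigma_{\omega^w}(T_{2,2n+1})$, together with the lower bound \eqref{eq:siglowerboundg4} that $|\sigma_\omega(K)| \leq 2\g(K)$ for regular $\omega$. The idea is to choose a suitable regular $\omega$ (a prime-power root of unity) at which $\sigma_{\omega^w}(T_{2,2n+1})$ achieves, or nearly achieves, its extremal value $\pm 2n$; since the signature function of $T_{2,2n+1}$ is a step function decreasing from $0$ to $-2n$ as $\omega$ moves along the unit circle, for each $n$ one can select a prime-power root of unity $\omega$ with $|\sigma_{\omega^w}(T_{2,2n+1})| = 2n$ (choosing $\omega$ so that $\omega^w$ lies in the appropriate arc, which is possible because prime-power roots of unity are dense and the target arc has positive length independent of the minor arithmetic constraints). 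Then $2\g(P(T_{2,2n+1})) \geq |\sigma_\omega(P(T_{2,2n+1}))| \geq |\sigma_{\omega^w}(T_{2,2n+1})| - |\sigma_\omega(P(U))| = 2n - |\sigma_\omega(P(U))| \geq 2n - 2\g(P(U))$, which rearranges to the desired bound.

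The main obstacle I anticipate is the careful choice of the prime-power root of unity $\omega$: one needs $\omega$ to be regular for $P(T_{2,2n+1})$ (so $\Delta_{P(T_{2,2n+1})}(\omega) \neq 0$, automatic if $\omega$ has prime-power order), and one needs $\omega^w$ to land in the arc where $\sigma_{T_{2,2n+1}}$ equals $-2n$ (equivalently, $\omega^w$ close enough to $-1$). Since the jump of the signature of $T_{2,2n+1}$ to its extreme value happens on an arc near $-1$ whose angular length is bounded below uniformly in $n$ (roughly $\pi/(2n+1)$ away from $-1$ on either side — actually one should double-check: the last jump of $\sigma_{T_{2,2n+1}}$ occurs at the root of $\Delta$ nearest $-1$, which is at angle $\approx \pi(1 - 1/(2n+1))$, so the extreme-value arc is short), I may instead want to argue more robustly: pick $\omega^w$ to be a prime-power root of unity lying between the two roots of $\Delta_{T_{2,2n+1}}$ flanking $-1$, which forces $|\sigma_{\omega^w}(T_{2,2n+1})| \in \{2n-2, 2n\}$ depending on parity conventions, and then absorb the possible deficit of $2$ into a slightly weaker but still sufficient estimate, or else note that $-1$ itself (order $2$, a prime power) can be used if $w$ is odd. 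A cleaner route, which I would likely adopt, is: choose $\xi$ a prime-power root of unity with no root of $\Delta_{P(T_{2,2n+1})}(t) = \Delta_{P(U)}(t)\Delta_{T_{2,2n+1}}(t^w)$ between $\xi^w$ and a chosen point $\om$ at which $\sigma_{T_{2,2n+1}}$ already equals its extreme value — this is the exact device used in the proof of \Cref{cor:simpleg4equality} — and then conclude as above. Handling the winding number $w=0$ case is not needed here since the statement assumes $w > 0$, but I should double-check that the signature of $T_{2,2n+1}$ genuinely attains $\pm 2n$ somewhere (it does, at $\om$ near $-1$), so that the lower bound is nonvacuous.
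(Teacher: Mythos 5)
Your proposal is essentially the paper's proof: the upper bound via \Cref{thm:mainIntro} together with $\gZ(T_{2,2n+1})=\g(T_{2,2n+1})=n$, and the lower bound via Litherland's signature formula~\eqref{eq:sigformulaforsat} and the triangle inequality at a well-chosen point on the circle. Two small remarks: the cleanest way to see $\g=\gZ=g_3=n$ for $T_{2,2n+1}$ is the signature bound at a point near $-1$ (which you already use in the lower-bound half, and which the paper records as the sandwich $2n=|\sigma_{e^{it_n}}(K_n)|\leq 2\g\leq 2\gZ\leq 2g_3=2n$), rather than an appeal to alternating/two-bridge; and your worry about the extreme-value arc being short is unfounded---the paper simply picks $s_n$ in $\bigl(\tfrac{(2n-1)\pi}{(2n+1)w},\tfrac{(2n+3)\pi}{(2n+1)w}\bigr)$ avoiding the finitely many roots of $\Delta_{P(U)}$, which already suffices for the bound $|\sigma_{e^{is_n}}(P(K_n))|\leq 2\g(P(K_n))$ (and is the same device as your ``cleaner route'').
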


\begin{proof}
Let $K_n= T_{2,2n+1}$.
We first observe that for $t_n \in (\frac{2n-1}{2n+1} \pi,\frac{2n+3}{2n+1}\pi)$,
 we have
\[2n=| \sigma_{e^{i t_n}}(K_n)| \leq 2  \g(K_n)\leq 2 \gZ(K_n) \leq 2 g_3(K_n) = 2n,\]
and hence we have equality throughout.

Now let  $P$ be a pattern of winding number $w>0$ and observe by \Cref{thm:mainIntro} that
\[\g(P(K_n)) \leq \gZ(P(K_n)) \leq \gZ(P(U))+ \gZ(K_n)= \gZ(P(U))+\g(K_n).
\]

We now need  to obtain our lower bound on $\g(P(K_n))$.  Let $s_n \in  \left(\frac{(2n-1) \pi}{(2n+1)w},\frac{(2n+3)\pi}{(2n+1)w}\right)$ be such that $e^{is_n}$ is not a root of $\Delta_{P(U)}(t)$. It follows that $e^{is_n}$ is not a root of $\Delta_{P(K_n)}(t)= \Delta_{P(U)}(t)\cdot \Delta_{K_n}(t^w)$ and so
\begin{align*} 2\g(P(K_n))\geq |\sigma_{e^{i s_n}}(P(K_n))|
&=  |\sigma_{e^{i s_n}}(P(U))+ \sigma_{e^{iws_n}}(K_n)| \\
& \geq 2\g(K_n)- |\sigma_{e^{i s_n}}(P(U))| \geq 2\g(K_n)-2\g(P(U)).\qedhere
\end{align*}
\end{proof}

\begin{prop}\label{prop:bigsmg4}
For any $w, m \in \mathbb{N}$, there exists a winding number $w$ pattern $P$ such that for any quasipositive knot $K$,
\[ \gsm(P(K))= \gsm(P(U))+ |w| \gsm(K)+ m.\]
\end{prop}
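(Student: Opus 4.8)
The plan is to construct the pattern $P$ by starting with the cable pattern $C_{w,1}$ and then modifying it by a satellite operation that increases the smooth 4-genus by a controlled amount while leaving the winding number and the behavior of $P(U)$ under the satellite operation unchanged. Concretely, I would take $P = Q(C_{w,1})$ where $Q$ is a suitable winding number $1$ pattern chosen so that $Q(J)$ has smooth 4-genus $\gsm(J) + m$ for every quasipositive knot $J$; then $P(K) = Q(C_{w,1})(K) = Q(C_{w,1}(K))$ by associativity of the satellite construction, and $P(U) = Q(C_{w,1}(U)) = Q(U)$ since $C_{w,1}(U) = U$.

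First I would produce the winding number $1$ pattern $Q$ with the property $\gsm(Q(J)) = \gsm(J) + m$ for all quasipositive $J$. The natural candidate is to build $Q$ from a genus $m$ quasipositive Seifert surface sitting inside the solid torus in a winding number $1$ fashion; equivalently, one can take $Q$ to be the connected sum (performed inside $V$) of the core of the solid torus with a fixed quasipositive knot $T$ of Seifert genus $m$ that has $\gsm(T) = \tau(T) = g_3(T) = m$, so that $Q(J) = J \# T$. For the upper bound, $\gsm(J \# T) \le \gsm(J) + \gsm(T) = \gsm(J) + m$. For the lower bound, I would invoke additivity-type results for $\tau$: since $J$ and $T$ are quasipositive with $\tau = \gsm$ and $\epsilon = +1$, we get $\tau(J \# T) = \tau(J) + \tau(T) = \gsm(J) + m$, hence $\gsm(J \# T) \ge \gsm(J) + m$. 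This pins down $\gsm(Q(J)) = \gsm(J) + m$.

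Next I would assemble the pieces. With $P = Q(C_{w,1})$ as above, the winding number of $P$ is $w \cdot 1 = w$; we have $P(U) = Q(C_{w,1}(U)) = Q(U) = T$, so $\gsm(P(U)) = m$; and for any quasipositive knot $K$, Hom's \Cref{rem:tauqp} gives that $C_{w,1}(K)$ is again a knot with $\tau(C_{w,1}(K)) = \gsm(C_{w,1}(K)) = w\gsm(K)$ and $\epsilon(C_{w,1}(K)) = +1$, i.e.\ it behaves like a quasipositive knot for the purposes of the computation above. Applying the formula for $Q$ to $J = C_{w,1}(K)$ yields
\[
\gsm(P(K)) = \gsm\big(Q(C_{w,1}(K))\big) = \gsm(C_{w,1}(K)) + m = w\gsm(K) + m.
\]
Combining with $\gsm(P(U)) = m$ and $|w| = w$, this is exactly $\gsm(P(K)) = \gsm(P(U)) + |w|\gsm(K) + m$, as desired.

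The main obstacle is the lower bound $\gsm(Q(J)) \ge \gsm(J) + m$, i.e.\ ensuring the genus genuinely cannot drop under the satellite/connected-sum operation: this is where one must be careful to use a concordance invariant (here $\tau$, together with the $\epsilon = +1$ hypothesis that lets one apply the relevant additivity statement to $C_{w,1}(K)$ even though it need not literally be quasipositive) rather than merely a genus bound. A secondary technical point is checking that Hom's theorem applies with $J = C_{w,1}(K)$ plugged into the $Q$-computation, which is exactly why the $\epsilon(C_{w,1}(K)) = +1$ conclusion in \Cref{rem:tauqp} is included; once that is in hand the argument is a routine concatenation of the additivity of $\tau$ under connected sum and the cabling formula.
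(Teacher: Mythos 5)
Your construction has an arithmetic gap that makes the argument collapse. You set $Q(J) = J \# T$ with $T$ a quasipositive knot of genus $m$, take $P = Q \circ C_{w,1}$, and correctly compute $\gsm(P(K)) = w\gsm(K) + m$. But you also get $P(U) = Q(C_{w,1}(U)) = Q(U) = U \# T = T$, so $\gsm(P(U)) = m$, not $0$. Plugging into the target identity $\gsm(P(K)) = \gsm(P(U)) + |w|\gsm(K) + m$ would require $w\gsm(K) + m = m + w\gsm(K) + m$, which fails for $m > 0$. What your construction actually establishes is $\gsm(P(K)) = \gsm(P(U)) + |w|\gsm(K)$ — the naive bound with no excess — and that is precisely what the proposition is designed to exceed.

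The essential point you are missing is that $Q$ must be a winding number $1$ pattern with $Q(U) = U$ (so $\gsm(Q(U)) = 0$) that nevertheless forces $\gsm(Q(J)) \geq \gsm(J) + 1$ for appropriate $J$. A connected sum pattern cannot do this, since whatever it adds to $\gsm(Q(J))$ it also adds to $\gsm(Q(U))$. The paper's solution is to take $Q$ to be the Mazur pattern (and $Q^m$ for general $m$): this pattern unknots the unknot, but Levine's theorem gives $\tau(Q(J)) = \tau(J) + 1$ and $\epsilon(Q(J)) = +1$ whenever $\epsilon(J) = +1$, which creates a genuine excess. The upper bound $\gsm(Q(J)) \leq \gsm(J) + 1$ is then supplied by the observation that a single crossing change turns the Mazur pattern into the core of the solid torus. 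Your outer framework — compose with $C_{w,1}$, use Hom's cabling result to verify $\epsilon(C_{w,1}(K)) = +1$, and concatenate $\tau$-additivity statements — is sound; the defect is purely in the choice of $Q$, and swapping your connected-sum pattern for (iterates of) the Mazur pattern repairs it.
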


\begin{proof}
Let $P_{m,w}= Q^{m} \circ C_{w,1}$, where $Q$ denotes the Mazur pattern, $\circ$ denotes pattern composition, and $Q^m$ denotes the $m$-fold composition of $Q$, which is an winding number 1 pattern which geometrically wraps $3^m$ times about the solid torus.
Note that $P_{m,w}$ is a winding number $w$ pattern.
Let $K$ be a quasipositive knot.
By Levine~\cite{levine16}, if $J$ is any knot with $\epsilon(J)=+1$ then $\tau(Q(J))= \tau(J)+1$ and $\epsilon(Q(J))=+1$.
Applying this to $J=C_{w,1}(K)$ and using \Cref{rem:tauqp} gives us  that
\[ \gsm(P_{m,w}(K)) \geq \tau(P_{m,w}(K))= \tau(Q^{m}(C_{w,1}(K)))=
\tau(C_{w,1}(K))+m= w \gsm(K)+m
\]
Since a single crossing change transforms $Q$ to a core of the solid torus, we have that $\gsm(Q(J)) \leq \gsm(J)+1$ for any knot $J$.
It is also easy to check that $\gsm(C_{w,1}(J)) \leq w \gsm(J)$ for any knot $J$, and so
\[ \gsm(P_{m,w}(K))= \gsm(Q^{m}(C_{w,1}(K))) \leq \gsm(C_{w,1}(K))+m\leq w \gsm(K)+m,
\]
and we have the desired equality.
\end{proof}
\begin{Example}\label{Ex:2cablesof2strandedtorusknots}
Let $p$ and $q$ be odd positive integers. We consider $C_{2,q}(T_{2,p})$, the $(2,q)$-cable of the $(2,p)$ torus knot. From another point of view, $C_{2,q}(T_{2,p})$ is the knot obtained as the closure of the
$4$-braid ${(a_2a_1a_3a_1)^p}{a_1}^{q-2p}$. Such a knot is  strongly quasipositive\footnote{For $q\geq0$, all (2,q)-cables of a non-trivial strongly quasipositive $K$ are strongly quasipositive since they are the boundary of a quasi-positive Seifert surface. Indeed, a Seifert surface is given as a $q$-fold positive Hopf plumbing on the zero framed annulus with core $K$. This Seifert surface is quasi-positive since positive Hopf plumbing preserves quasipositivity (see~\cite{Rudolph_98_QuasipositivePlumbing}) and the zero framed annulus with core $K$ is a quasi-positive Seifert surface (see~\cite[Lemma~1 and its proof]{rudolph_QPasObstruction}).} and as such has $g_3=\tau=\gsm$.
Concretely, \[\gsm(C_{2,q}(T_{2,p}))=g_3(C_{2,q}(T_{2,p}))=(q-1)/2+2g_3(T_{2,p})=\frac{q-1}{2}+p-1.\]

In contrast, we have as an application of \Cref{thm:mainIntro} that
\begin{equation}\label{eq:upperboundfor2cables}\g\left(C_{2,q}(T_{2,p})\right)\leq \gZ\left(C_{2,q}(T_{2,p})\right)\leq \gZ\left(C_{2,q}(U)\right)+\gZ(T_{2,p})=\frac{q-1}{2}+\frac{p-1}{2},\end{equation}

where the equality follows from $|\sigma/2|=\g=\gZ=g_3=(n-1)/2$ for $T_{2,n}$ with $n>1$ odd.
This a priori seems unexpected for all values of $q$. We discuss some special cases.

\textbf{Upper and lower bounds coincide on $\g$:} For $q=1$, \eqref{eq:upperboundfor2cables} is of course subsumed by~\Cref{cor:unknottedpattern}, and the inequalities are equalities. Similarly, upper and lower bounds agree for $p=1$, though this is less interesting since $C_{2,q}(T_{2,1})= T_{2,q}$. In fact, the lower bound for $\g(C_{2,q}(T_{2,p}))$ coming from Tristram-Levine signatures equals the upper bound of~\eqref{eq:upperboundfor2cables} when $q=1,3$ and any $p$, when $q=5$ and $p=3,5,7,9$, when $q=7$ and $p=3$, and for any $q$ when $p=1$.
Indeed, for $p,q\geq 3$ a Tristram-Levine signature calculation\footnote{Setting $\omega=e^{2\pi i t}$ with $t=\frac{1}{4}+\frac{1}{2p}-\epsilon$ and $t=\frac{q-2}{2q}+\epsilon$ for $\epsilon$ sufficiently small, we have \[\frac{\left|\sigma_\omega(C_{2,q}(T_{2,p}))\right|}{2}=\left\lfloor\frac{q}{4}+\frac{q}{2p}+\frac{1}{2}\right\rfloor+ \frac{p-1}{2}=\frac{q-1}{2}+\frac{p-1}{2}+\left\lfloor-\frac{q}{4}+\frac{q}{2p}+1\right\rfloor=\frac{q-1}{2}+\frac{p-1}{2}-\left\lfloor \frac{q}{4}-\frac{q}{2p}\right\rfloor\et\]
\[\frac{\left|\sigma_\omega(C_{2,q}(T_{2,p}))\right|}{2}
=\frac{q-1}{2}+\left\lfloor\frac{2p}{q}+\frac{1}{2}\right\rfloor
=\frac{p-1}{2}+\frac{q-1}{2}+\left\lfloor\frac{2p}{q}+\frac{1}{2}-\frac{p-1}{2}\right\rfloor
=\frac{p-1}{2}+\frac{q-1}{2}-\left\lfloor\frac{p}{2}-\frac{2p}{q}\right\rfloor,\text{ respectively.}\]
}
yields
\begin{equation}\label{eq:siglowerbounds}
\g(C_{2,q}(T_{2,p}))\geq \frac{q-1}{2}+\frac{p-1}{2}-\min\left\{\left\lfloor\frac{q}{4}-\frac{q}{2p}\right\rfloor,\left\lfloor\frac{p}{2}-\frac{2p}{q}\right\rfloor\right\},
\end{equation}
and one easily checks $\left\lfloor\frac{q}{4}-\frac{q}{2p}\right\rfloor=0$ if and only if $\left\lfloor\frac{p}{2}-\frac{2p}{q}\right\rfloor=0$ if and only if $\frac{1}{2}<\frac{1}{p}+\frac{2}{q}$. 

\textbf{Positive braid knots:} For $q= 2p+1$, $C_{2,2p+1}(T_{2,p})$ is the blackboard $+1$ framed cable of $T_{2,p}$ and as such the closure of a positive $4$-braid. (Indeed, ${(a_2a_1a_3a_1)^p}{a_1}^{q-2p}$ is evidently a positive $4$-braid for $q\geq 2p$.)
We find \begin{equation}\label{eq:pos4braid}
p+1 \leq \g(C_{2,2p+1}(T_{2,p}))\leq p+\frac{p-1}{2}=\frac{3p-1}{2}<\gsm\left(C_{2,2p+1}(T_{2,p})\right)=2p-1,\end{equation}
where the first inequality comes from~\Cref{eq:siglowerbounds} with $q=2p+1$.
This constitutes a significant difference between $\g$ and $\gsm$ for an infinite family of knots given as closures of a positive $4$-braid: for large $p$ the situation is
\[\frac{1}{2}\leq \lim_{p\to\infty}\frac{\g}{g_3}\left(C_{2,2p+1}(T_{2,p})\right)\overset{\text{\eqref{eq:pos4braid}}}{\leq}\frac{3}{4}< 1=\frac{\gsm}{g_3}.\]

We iterate the construction described above as follows.
For any positive braid $\beta$ of length $c$ with closure a knot $K$, one may consider the cable $C_{2,2c+1}(K)$.
This is the blackboard $+1$-framed cable of the standard diagram of $K$ coming from $\beta$ and hence is the closure of a positive braid of double the braid index of $\beta$ and length $4c+1$. We consider the result of $n$-times iterating this process starting with $K=T_{2,p}$ for $p \geq 3$ odd, by defining the knot
\[K_{n,p}=C_{2,2c_{n-1}+1}\left(C_{2,2c_{n-1}+1}\left(\cdots C_{2,2c_0+1}(T_{2,p})\cdots\right)\right),\]
where $c_0:=p$ and for $k \geq 1$ we define
\[c_k=4c_{k-1}+1=4(4c_{k-2}+1)+1= \cdots =4^kp+\frac{4^k-1}{3}.\]
Since $K_{n,p}$ is a positive knot, by applying Schubert's theorem for the 3-genus of a satellite knot we obtain
 \[\gsm(K_{n,p})=g_3(K_{n,p})= \sum_{k=0}^{n-1} c_k 2^{n-1-k} + \left(\frac{p-1}{2}\right)2^n=2^{2n-1}(p+ \frac{1}{3}) -2^n+ \frac{1}{3}.\]
Iteratively applying \Cref{prop:maininequality}, we find
\[\g(K_{n,p})\leq\gZ(K_{n,p})\leq\sum_{0}^{n-1}c_k+\frac{p-1}{2}=2^{2n-1}\left( \frac{2p}{3}+ \frac{2}{9}\right) - \frac{3n+1}{9} + \frac{p-3}{6}.\]
 Thus, we have
\[\lim_{n\to\infty}\frac{\g(K_{n,p})}{g_3(K_{n,p})}\leq \frac{2}{3} < 1=\frac{\gsm(K_{n,p})}{g_3(K_{n,p})}.\]

\textbf{Algebraic knots and torus knots:}
For $q= 4p+1$, $C_{2,4p+1}(T_{2,p})$ is an algebraic knot, which is smooth cobordism distance one from the torus knot $T_{4,2p+1}$. Consequently, \[\g(T_{4,2p+1})\leq  \g\left(C_{2,4p+1}(T_{2,p})\right)+1\overset{\text{\eqref{eq:upperboundfor2cables}}}{\leq} (5p-1)/2+1<3p=\gsm(T_{4,2p+1})=g_3(T_{4,2p+1}),\]
for $p>1$.
 This gives
\[\lim_{p\to\infty}\frac{\g(T_{4,2p+1})}{g_3(T_{4,2p+1})}\leq \frac{5}{6}.\] A priori, this is not particularly interesting since better upper bounds for $\g$ of torus knots with braid index $4$ were obtained in~\cite[Lemma~22(ii)]{BaaderFellerLewarkLiechti_15}. However, we find it noteworthy for two reasons.
Firstly, opposite the somewhat example based nature of the upper bounds from~\cite{BaaderFellerLewarkLiechti_15}, it is pleasant that no explicit Seifert matrix consideration for a specific knot is needed once Theorem~\ref{thm:mainIntro} is available. Secondly, by considering iterated cables of torus knots, one can find bounds on the topological 4-genera of torus knots of larger braid index that significantly improve the main results of~\cite{BaaderFellerLewarkLiechti_15}. However, this does not yield better results than those obtained by McCoy \cite{McCoy_19}, whose upper bounds on $\g(T_{p,q})$  for large $p,q$ improve any previous work; we refer the reader to his text for said bounds.
\end{Example}
\bibliographystyle{alpha}
\bibliography{peterbib,references}

\def\cprime{$'$}
\begin{thebibliography}{BFLL18}

\bibitem[BFLL18]{BaaderFellerLewarkLiechti_15}
S.~Baader, P.~Feller, L.~Lewark, and L.~Liechti.
\newblock On the topological 4-genus of torus knots.
\newblock {\em Trans. Amer. Math. Soc.}, 370(4):2639--2656, 2018.
\newblock ArXiv:1509.07634 [math.GT].

\bibitem[CG78]{cg-slice}
A.~J. Casson and C.~McA. Gordon.
\newblock On slice knots in dimension three.
\newblock In {\em Algebraic and geometric topology ({P}roc. {S}ympos. {P}ure
  {M}ath., {S}tanford {U}niv., {S}tanford, {C}alif., 1976), {P}art 2}, Proc.
  Sympos. Pure Math., XXXII, pages 39--53. Amer. Math. Soc., Providence, R.I.,
  1978.

\bibitem[CG86]{cg-cob}
A.~J. Casson and C.~McA. Gordon.
\newblock Cobordism of classical knots.
\newblock In {\em \`A la recherche de la topologie perdue}, volume~62 of {\em
  Progr. Math.}, pages 181--199. Birkh\"{a}user Boston, Boston, MA, 1986.
\newblock With an appendix by P. M. Gilmer.

\bibitem[CH18]{CHmetrics}
Tim Cochran and Shelly Harvey.
\newblock The geometry of the knot concordance space.
\newblock {\em Algebr. Geom. Topol.}, 18(5):2509--2540, 2018.

\bibitem[CMP19]{cmp}
Jae~Choon Cha, Allison~N. Miller, and Mark Powell.
\newblock Two-solvable and two-bipolar knots with large four-genera, 2019.

\bibitem[Fel16]{Feller_15_DegAlexUpperBoundTopSliceGenus}
Peter Feller.
\newblock The degree of the {A}lexander polynomial is an upper bound for the
  topological slice genus.
\newblock {\em Geometry and Topology}, 20:1763--1771, 2016.
\newblock ArXiv:1504.01064 [math.GT].

\bibitem[FL18]{FellerLewark_16}
Peter Feller and Lukas Lewark.
\newblock On classical upper bounds for slice genera.
\newblock {\em Selecta Math. (N.S.)}, 24(5):4885--4916, 2018.
\newblock ArXiv:1611.02679 [math.GT].

\bibitem[FL19]{FellerLewark_19}
Peter Feller and Lukas Lewark.
\newblock Balanced algebraic unknotting, linking forms, and surfaces in three-
  and four-space.
\newblock {\em ArXiv e-prints}, 2019.
\newblock ArXiv:1905.08305 [math.GT].

\bibitem[FP17]{FriedlPowell}
Stefan Friedl and Mark Powell.
\newblock A calculation of {B}lanchfield pairings of 3-manifolds and knots.
\newblock {\em Mosc. Math. J.}, 17(1):59--77, 2017.

\bibitem[Fre82]{Freedman_82_TheTopOfFour-dimensionalManifolds}
Michael~H. Freedman.
\newblock The topology of four-dimensional manifolds.
\newblock {\em J. Differential Geom.}, 17(3):357--453, 1982.

\bibitem[Gil82]{Gilmer_slicegenus}
Patrick~M. Gilmer.
\newblock On the slice genus of knots.
\newblock {\em Invent. Math.}, 66(2):191--197, 1982.

\bibitem[GS99]{GompfStipsicz}
Robert~E. Gompf and Andr{\'a}s~I. Stipsicz.
\newblock {\em {$4$}-manifolds and {K}irby calculus}, volume~20 of {\em
  Graduate Studies in Mathematics}.
\newblock American Mathematical Society, Providence, RI, 1999.

\bibitem[Hom14]{hom}
Jennifer Hom.
\newblock Bordered {H}eegaard {F}loer homology and the tau-invariant of cable
  knots.
\newblock {\em J. Topol.}, 7(2):287--326, 2014.

\bibitem[Kea75]{kearton}
C.~Kearton.
\newblock Cobordism of knots and {B}lanchfield duality.
\newblock {\em J. London Math. Soc. (2)}, 10(4):406--408, 1975.

\bibitem[Ko89]{Ko89}
Ki~Hyoung Ko.
\newblock A {S}eifert-matrix interpretation of {C}appell and {S}haneson's
  approach to link cobordisms.
\newblock {\em Math. Proc. Cambridge Philos. Soc.}, 106(3):531--545, 1989.

\bibitem[Lev69]{levine69}
J.~Levine.
\newblock Knot cobordism groups in codimension two.
\newblock {\em Comment. Math. Helv.}, 44:229--244, 1969.

\bibitem[Lev16]{levine16}
Adam~Simon Levine.
\newblock Nonsurjective satellite operators and piecewise-linear concordance.
\newblock {\em Forum Math. Sigma}, 4:e34, 47, 2016.

\bibitem[Lic97]{Lickorish_97}
W.~B.~Raymond Lickorish.
\newblock {\em An introduction to knot theory}, volume 175 of {\em Graduate
  Texts in Mathematics}.
\newblock Springer-Verlag, New York, 1997.

\bibitem[Lit79]{litherland-it}
R.~A. Litherland.
\newblock Signatures of iterated torus knots.
\newblock In {\em Topology of low-dimensional manifolds ({P}roc. {S}econd
  {S}ussex {C}onf., {C}helwood {G}ate, 1977)}, volume 722 of {\em Lecture Notes
  in Math.}, pages 71--84. Springer, Berlin, 1979.

\bibitem[Lit84]{Litherland_satellite}
R.~A. Litherland.
\newblock Cobordism of satellite knots.
\newblock In {\em Four-manifold theory ({D}urham, {N}.{H}., 1982)}, volume~35
  of {\em Contemp. Math.}, pages 327--362. Amer. Math. Soc., Providence, RI,
  1984.

\bibitem[Liv11]{livingston-genus}
Charles Livingston.
\newblock Knot 4-genus and the rank of classes in {$W(\Bbb Q(t))$}.
\newblock {\em Pacific J. Math.}, 252(1):113--126, 2011.

\bibitem[LM85]{LivingstonMelvin85}
Charles Livingston and Paul Melvin.
\newblock Abelian invariants of satellite knots.
\newblock In {\em Geometry and topology ({C}ollege {P}ark, {M}d., 1983/84)},
  volume 1167 of {\em Lecture Notes in Math.}, pages 217--227. Springer,
  Berlin, 1985.

\bibitem[McC19]{McCoy_19}
Duncan McCoy.
\newblock Null homologous twisting and the algebraic genus.
\newblock {\em In preparation}, 2019.

\bibitem[Mil19]{Millerwinding}
Allison~N. Miller.
\newblock Winding number {$m$} and {$-m$} patterns acting on concordance.
\newblock {\em Proc. Amer. Math. Soc.}, 147(6):2723--2731, 2019.

\bibitem[Rud84]{Rudolph_84_SomeTopLocFlatSurf}
Lee Rudolph.
\newblock Some topologically locally-flat surfaces in the complex projective
  plane.
\newblock {\em Comment. Math. Helv.}, 59(4):592--599, 1984.

\bibitem[Rud93]{rudolph_QPasObstruction}
Lee Rudolph.
\newblock Quasipositivity as an obstruction to sliceness.
\newblock {\em Bull. Amer. Math. Soc. (N.S.)}, 29(1):51--59, 1993.

\bibitem[Rud98]{Rudolph_98_QuasipositivePlumbing}
Lee Rudolph.
\newblock Quasipositive plumbing (constructions of quasipositive knots and
  links. {V}).
\newblock {\em Proc. Amer. Math. Soc.}, 126(1):257--267, 1998.

\bibitem[Sch53]{schubert}
Horst Schubert.
\newblock Knoten und {V}ollringe.
\newblock {\em Acta Math.}, 90:131--286, 1953.

\bibitem[Tay79]{taylor}
Laurence~R. Taylor.
\newblock On the genera of knots.
\newblock In {\em Topology of low-dimensional manifolds ({P}roc. {S}econd
  {S}ussex {C}onf., {C}helwood {G}ate, 1977)}, volume 722 of {\em Lecture Notes
  in Math.}, pages 144--154. Springer, Berlin, 1979.

\bibitem[Tri69]{tristram}
Andrew~G. Tristram.
\newblock Some cobordism invariants for links.
\newblock {\em Proc. Cambridge Philos. Soc.}, 66:251--264, 1969.

\end{thebibliography}
\end{document}